\begin{document}

\newtheorem{thm}{Theorem}[section]
\newtheorem{theorem}{Theorem}[section]
\newtheorem{lem}[thm]{Lemma}
\newtheorem{lemma}[thm]{Lemma}
\newtheorem{prop}[thm]{Proposition}
\newtheorem{proposition}[thm]{Proposition}
\newtheorem{corollary}[thm]{Corollary}
\newtheorem{definition}[thm]{Definition}
\newtheorem{remark}[thm]{Remark}
\newtheorem{conjecture}[theorem]{Conjecture}

\numberwithin{equation}{section}

\newcommand{\Z}{{\mathbb Z}} 
\newcommand{\Q}{{\mathbb Q}}
\newcommand{\R}{{\mathbb R}}
\newcommand{\C}{{\mathbb C}}
\newcommand{\N}{{\mathbb N}}
\newcommand{\FF}{{\mathbb F}}
\newcommand{\fq}{\mathbb{F}_q}
\newcommand{\rmk}[1]{\footnote{{\bf Comment:} #1}}

\newcommand{\bfA}{{\boldsymbol{A}}}
\newcommand{\bfY}{{\boldsymbol{Y}}}
\newcommand{\bfX}{{\boldsymbol{X}}}
\newcommand{\bfZ}{{\boldsymbol{Z}}}
\newcommand{\bfa}{{\boldsymbol{a}}}
\newcommand{\bfy}{{\boldsymbol{y}}}
\newcommand{\bfx}{{\boldsymbol{x}}}
\newcommand{\bfz}{{\boldsymbol{z}}}
\newcommand{\F}{\mathcal{F}}
\newcommand{\Gal}{\mathrm{Gal}}
\newcommand{\Fr}{\mathrm{Fr}}
\newcommand{\Hom}{\mathrm{Hom}}
\newcommand{\GL}{\mathrm{GL}}

\renewcommand{\mod}{\;\operatorname{mod}}
\newcommand{\ord}{\operatorname{ord}}
\newcommand{\TT}{\mathbb{T}}
\renewcommand{\i}{{\mathrm{i}}}
\renewcommand{\d}{{\mathrm{d}}}
\renewcommand{\^}{\widehat}
\newcommand{\HH}{\mathbb H}
\newcommand{\Vol}{\operatorname{vol}}
\newcommand{\area}{\operatorname{area}}
\newcommand{\tr}{\operatorname{tr}}
\newcommand{\norm}{\mathcal N} 
\newcommand{\intinf}{\int_{-\infty}^\infty}
\newcommand{\ave}[1]{\left\langle#1\right\rangle} 
\newcommand{\Var}{\operatorname{Var}}
\newcommand{\Prob}{\operatorname{Prob}}
\newcommand{\sym}{\operatorname{Sym}}
\newcommand{\disc}{\operatorname{disc}}
\newcommand{\CA}{{\mathcal C}_A}
\newcommand{\cond}{\operatorname{cond}} 
\newcommand{\lcm}{\operatorname{lcm}}
\newcommand{\Kl}{\operatorname{Kl}} 
\newcommand{\leg}[2]{\left( \frac{#1}{#2} \right)}  
\newcommand{\Li}{\operatorname{Li}}

\newcommand{\sumstar}{\sideset \and^{*} \to \sum}

\newcommand{\LL}{\mathcal L} 
\newcommand{\sumf}{\sum^\flat}
\newcommand{\Hgev}{\mathcal H_{2g+2,q}}
\newcommand{\USp}{\operatorname{USp}}
\newcommand{\conv}{*}
\newcommand{\dist} {\operatorname{dist}}
\newcommand{\CF}{c_0} 
\newcommand{\kerp}{\mathcal K}

\newcommand{\Cov}{\operatorname{cov}}
\newcommand{\Sym}{\operatorname{Sym}}

\newcommand{\ES}{\mathcal S} 
\newcommand{\EN}{\mathcal N} 
\newcommand{\EM}{\mathcal M} 
\newcommand{\Sc}{\operatorname{Sc}} 
\newcommand{\Ht}{\operatorname{Ht}}

\newcommand{\E}{\operatorname{E}} 
\newcommand{\sign}{\operatorname{sign}} 

\newcommand{\divid}{d} 

\newcommand{\h}{\mathcal{H}_{2g+1,q}}
\newcommand{\p}{\mathcal{P}_{2g+1,q}}
\newcommand{\f}{\mathbb{F}_{q}[T]}
\newcommand{\z}{\zeta_A}
\newcommand{\lo}{\log_q}
\newcommand{\x}{\chi}
\newcommand{\xx}{\mathcal{X}}
\newcommand{\lL}{\mathcal{L}}
\newcommand{\e}{\varepsilon}

\title[Moments of Dirichlet $L$-functions]
{On the moments of certain families of Dirichlet $L$-functions}

\author{J. C. Andrade}
\address{Department of Mathematics, University of Exeter, Exeter, EX4 4QF, United Kingdom}
\email{j.c.andrade@exeter.ac.uk}

\author{K. Smith}
\address{Department of Mathematics, University of Exeter, Exeter, EX4 4QF, United Kingdom}
\email{ks614@exeter.ac.uk}

\subjclass[2010]{Primary 11M06; Secondary 11M50, 11N13}
\keywords{moments of $L$--functions, arithmetic progressions, quadratic Dirichlet $L$--functions, non-diagonal terms, Poisson formula}

\begin{abstract}
In this paper we address the problem of computing asymptotic formulae for the expected values and second moments of central values of primitive Dirichlet $L$-functions $L(1/2,\chi_{8d}\otimes\psi)$ when $\psi$ is a fixed even primitive non-quadratic character of odd modulus $q$, $\chi_{8d}$ is a primitive quadratic character, $d\equiv h\pmod r$ is odd and squarefree and $r\equiv0\pmod q$ is even. Restricting to these arithmetic progressions ensures that the resulting sets of $L$-functions each form a ``family of primitive $L$-functions" in the specific sense defined by Conrey, Farmer, Keating, Rubinstein and Snaith. Soundararajan had previously computed these quantities without restricting them to arithmetic progressions. It turns out that the restriction to arithmetic progressions introduces non-diagonal terms that require significantly more detailed analysis which we carry on in this paper.
\end{abstract}

\date{\today}

\maketitle


\section{Introduction and statement of results}

The moments of families of primitive $L$-functions are central objects of study in analytic number theory due to their many applications and the rich conjectural structure that relates them to the spectral theory of random matrices in the classical compact groups. Associated with each primitive $L$-function 
\begin{eqnarray}\label{dirl}
L(s,f)=\sum_{n=1}^{\infty}\frac{f(n)}{n^s},
\end{eqnarray}
the $L$-function
\begin{eqnarray}\label{dirl}
L(s,f\otimes\chi_d)=\sum_{n=1}^{\infty}\frac{f(n)\chi_{d}(n)}{n^s}
\end{eqnarray}
with $\chi_d$ a primitive quadratic Dirichlet character (so $d$ is a fundamental discriminant and $\chi_d(\cdot)=(d/\cdot)$ is the Kronecker symbol), is a quadratic twist of $L(s,f)$. 
According to the conjectural classification of Katz and Sarnak \cite{KS}, sets of quadratic twists with $|d|\leq X$ and $X\rightarrow\infty$ form unitary, orthogonal or symplectic families (depending on the coefficients $f(n)$ of the original $L$-function) and, by analogy with the distribution of eigenvalues of large random matrices in the corresponding group, this classification is expected to govern the distribution of zeros in the family near the central point. \\

The values of $L$-functions at the central point are of special interest. In particular, it is conjectured that Dirichlet $L$-functions do not vanish at their central point $s=1/2$. In this direction, for a fixed non-quadratic primitive Dirichlet character  $\psi$ of odd modulus $q$ and $8d$ ranging over all positive fundamental discriminants divisible by $8$ and coprime with $q$, Soundararajan \cite{S1} showed that at least one fifth of the family of Dirichlet $L$-functions $L(s,\psi\otimes\chi_{8d})$ are non-zero at $s=1/2$. He achieved this by computing the mollified moments, which firstly require asymptotic formulae (with good uniformity in the parameter $l$) for the expected values 
\begin{eqnarray}\label{mom1}
\frac{1}{X}\sum_{(d,2q)=1\textrm{ squarefree}}\chi_{8d}\left(l\right)L(1/2,\psi\otimes\chi_{8d})\Phi\left(\frac{d}{X}\right)
\end{eqnarray}
and the second moments
\begin{eqnarray}\label{mom2}
\frac{1}{X}\sum_{(d,2q)=1\textrm{ squarefree}}\chi_{8d}\left(l\right)\left|L(1/2,\psi\otimes\chi_{8d})\right|^2\Phi\left(\frac{d}{X}\right),
\end{eqnarray}
where $\Phi\leq 1$ is a fixed smooth positive compactly supported approximation to the characteristic function of the interval $(1,2)$. Soundararajan found asymptotic formulae for the expected values (\ref{mom1}) using the Polya-Vinogradov inequality, and for the second moments (\ref{mom2}) using a version of the Poisson summation formula. A key step of his argument is the demonstration that the resulting non-diagonal terms are negligible, while he noted that this is not the case when $\psi$ is quadratic. In later work \cite{S2}, Soundararajan succeeded to isolate and evaluate the non-diagonal contribution in the case $\psi=1$, allowing him to compute asymptotic formulae for the first three moments and to show that at least $7/8$ of this family are non-zero at $s=1/2$; a key step of the argument being a functional equation that facilitates the evaluation of a certain integral representation of the non-diagonal main terms.  \\

Based on a conjectural ``recipe''  and a certain combinatorial mechanism (\cite{CFKRS}, Lemma 2.5.2) that casts the integral moments of sets of quadratic twists of primitive of $L$-functions with real coefficients into the same form as the moments of the characteristic polynomials for the orthogonal or symplectic groups, Conrey, Farmer, Keating, Rubinstein and Snaith have conjectured asymptotic formulae for all the integral moments of specific families of quadratic twists of primitive $L$-functions. These families are parametrised by those fundamental discriminants that lie in arithmetic progressions for which the parameters in the functional equations  of the twisted $L$-functions are invariants of the conductor (i.e. the root numbers and the spectral parameters). The family of quadratic Dirichlet $L$-functions studied by Soundararajan in \cite{S2} is an example of the symplectic case (at least, he showed that the first three moments match the  symplectic statistics). However, the families of $L$-functions studied by Soundararajan in \cite{S1} are not quite families in the specific sense of CFKRS because the parameters in the functional equations of the $L(s,\psi\chi_{8d})$ are not invariant as $8d$ ranges over positive fundamental discriminants coprime with $q$. Restricting to positive fundamental discriminants coprime with $q$ in a fixed residue class (mod $r$) with $r\equiv 0\pmod q$ remedies this. For example, the subsets
\begin{eqnarray}\label{families}
\mathcal{F}_{h,r}(\psi)=\left\{ L\left(s,\psi\otimes\chi_{8d}\right): d\equiv h  \textrm{ (mod $r$) is odd and squarefree}\right\}\nonumber
\end{eqnarray}
with $r\equiv 0\pmod q$ even and squarefree, $(h,r)=1$, are families of primitive $L$-functions in the sense of CFKRS. \\

In this paper we compute asymptotic expansions for the analogues of (\ref{mom1}) and (\ref{mom2}) over the families $\mathcal{F}_{h,r}(\psi)$. With a view to introducing mollifiers later, we pay careful attention to obtaining good uniformity in the parameter $l$. Our main results are as follows.

\begin{theorem}\label{th1}Firstly, if $\psi$ is a fixed even primitive non-quadratic character of odd modulus $q$, $r\equiv 0\pmod q$ is even and squarefree, $(h,r)=1$ and $lrY^2\ll X^{1/2-\delta}$ for some fixed $\delta>0$, then there is a fixed $C_{h,l,r,\psi,\Phi}$ such that
\begin{eqnarray}\label{mom3}
\frac{1}{X}\sum_{\substack{d\equiv h \pmod r\\d\textrm{ \emph{squarefree}}}}\chi_{8d}\left(l\right)L(1/2,\psi\otimes\chi_{8d})\Phi\left(\frac{d}{X}\right)=C_{h,l,r,\psi,\Phi}+E_{l,r,\Phi,X,Y}
\end{eqnarray}
where 
\begin{eqnarray}\label{th1errorbound}
E_{l,r,\Phi,X,Y} \ll_{\epsilon,\Phi} \frac{l^{3/2}rY^2}{X^{1/2-\epsilon}} +\frac{l^{1/2}r^{\epsilon-1}}{X^{1/4-\epsilon}} +\frac{X^{\epsilon}}{Y^{1/2}}
\end{eqnarray}
for any fixed $\epsilon>0$. \\

Secondly, if $\psi=1$ is the trivial character, there is a polynomial $P_{h,l,r,\Phi}$ of degree $1$ such that
\begin{eqnarray}\label{mom3b}
\frac{1}{X}\sum_{\substack{d\equiv h \pmod r\\d\textrm{ \emph{squarefree}}}}\chi_{8d}\left(l\right)L(1/2,\chi_{8d})\Phi\left(\frac{d}{X}\right)=P_{h,l,r,\Phi}(\log X)+E_{l,r,\Phi,X,Y}.
\end{eqnarray}
Moreover, if $(l,r)>1$ then 
\begin{eqnarray}\label{zeros}
C_{h,l,r,\psi,\Phi}=P_{h,l,r,\Phi}=0.
\end{eqnarray} 
\end{theorem}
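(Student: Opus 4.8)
The plan is to establish the first moment formula \eqref{mom3} and \eqref{mom3b} by opening up $L(1/2,\psi\otimes\chi_{8d})$ via an approximate functional equation, so that it becomes a sum $\sum_n \psi(n)\chi_{8d}(n)/\sqrt n$ weighted by a smooth cutoff $V(n/\sqrt X)$ supported essentially on $n \ll \sqrt X \cdot X^{\epsilon}$, plus a dual sum of the same shape controlled by the root number. First I would substitute this into the left side of \eqref{mom3}, interchange the order of summation, and isolate the inner character sum $\sum_{d\equiv h\ (r),\ d\text{ squarefree}}\chi_{8d}(ln)\Phi(d/X)$. The standard device is to detect squarefreeness by $\sum_{a^2\mid d}\mu(a)$, truncating at $a\le Y$ (this is where $Y$ enters and where the term $X^\epsilon/Y^{1/2}$ in \eqref{th1errorbound} comes from), and then to split the remaining sum over $d$ in the progression $d\equiv h\ (\mathrm{mod}\ r a^2)$ according to whether $ln$ is a perfect square. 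The perfect-square terms (the ``diagonal'') produce the main term; when $ln=\square$ the character $\chi_{8d}(ln)$ is essentially the principal character modulo $8ln$, so summing $\Phi(d/X)$ over the arithmetic progression gives a main term of size $X$ times a density factor, and after dividing by $X$ and reassembling the sum over $n$ one obtains a constant $C_{h,l,r,\psi,\Phi}$ (respectively, for $\psi=1$, an Euler product differentiated once, yielding the degree-one polynomial $P_{h,l,r,\Phi}(\log X)$ because the $n$-sum $\sum_{m:\ lm^2\le\sqrt X}1/(lm^2)^{1/2}\cdot(\text{density})$ diverges logarithmically). The non-square terms are handled by the Pólya–Vinogradov inequality exactly as in Soundararajan \cite{S1}: $\sum_{d\le X,\ d\equiv h\ (q')}\chi_{8d}(m)\ll q'^{1/2}m^{1/2}\log m$ for $m$ not a square, and summing this against $V(n/\sqrt X)$ over $n$ and over $a\le Y$ gives the first two error terms in \eqref{th1errorbound} after keeping track of the $l$, $r$ and $Y$ dependence.

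For the vanishing statement \eqref{zeros}, the key observation is that when $(l,r)>1$ the diagonal terms simply cannot occur. Indeed, the main term is built from those $n$ for which $ln$ is a perfect square and for which $\chi_{8d}(ln)$ is not identically zero as $d$ ranges over the progression $d\equiv h\ (\mathrm{mod}\ r)$; the latter requires $(ln,\,8d)=1$ for the relevant $d$, hence in particular $(ln,r)=1$ (since $r\mid 8d$-type constraints force $\chi_{8d}$ to vanish on any $n$ sharing a factor with $r$, as $8d$ and $r$ share the prime $q\mid r$ and more). Concretely, if a prime $p$ divides both $l$ and $r$, then for every $d$ in the progression we have $p\mid r$, and one checks from the explicit local conditions defining the main term — squarefreeness of $d$ together with $d\equiv h\ (\mathrm{mod}\ r)$ with $(h,r)=1$ — that $\chi_{8d}(l)=0$ identically, so the entire left-hand side of \eqref{mom3} has no diagonal contribution; since the main term \emph{is} the diagonal contribution, $C_{h,l,r,\psi,\Phi}=0$ and likewise $P_{h,l,r,\Phi}\equiv 0$. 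This is really a statement that the Euler factor at each prime $p\mid(l,r)$ in the arithmetic expression for $C_{h,l,r,\psi,\Phi}$ is zero.

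The main obstacle is bookkeeping the uniformity in $l$ (and $r$, $Y$) through the Pólya–Vinogradov step while simultaneously keeping the squarefree-sieve truncation error under control: one must balance the truncation parameter $a\le Y$ against the length $n\ll\sqrt X$ of the approximate functional equation, and the cross term $a^2 n \le Y^2\sqrt X$ appearing inside the modulus $ra^2$ of the progression is exactly what produces the constraint $lrY^2\ll X^{1/2-\delta}$ and the shape of \eqref{th1errorbound}. A secondary technical point is the treatment of the dual sum in the approximate functional equation when $\psi\ne 1$: the root number of $L(s,\psi\otimes\chi_{8d})$ depends on $d$ only through its residue modulo $r$ (this is precisely why the restriction to the progression was imposed), so it is a genuine constant on $\mathcal F_{h,r}(\psi)$ and the dual sum is structurally identical to the principal sum, contributing only to the same main term and error terms; verifying this invariance of the functional equation data is the one place where the hypothesis $r\equiv 0\pmod q$, $r$ even and squarefree, is essential, and where the case $\psi=1$ (self-dual, symplectic symmetry, hence the extra logarithm) genuinely differs from $\psi\ne1$.
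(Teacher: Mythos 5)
Your main line of attack for the asymptotic itself --- approximate functional equation, squarefree sieve truncated at $Y$ (source of the $X^{\epsilon}/Y^{1/2}$ term), then a split according to whether $ln$ is a perfect square, with P\'olya--Vinogradov controlling the non-square terms --- is sound and is essentially Soundararajan's method from \cite{S1} adapted to the progression $d\equiv h\pmod r$. The paper takes a different route: it pushes the $d$-sum through an arithmetic-progression Poisson summation formula (Lemma \ref{poisson}, following \cite{S2} and \cite{SR}), the diagonal being the zero frequency and the nonzero frequencies being bounded trivially via the decay of $\widehat F$. For the first moment both methods give acceptable error terms; one small misattribution is that the middle term $l^{1/2}r^{\epsilon-1}X^{\epsilon-1/4}$ of \eqref{th1errorbound} does not come from P\'olya--Vinogradov but from the contour shift needed to evaluate the diagonal sum over $n$ asymptotically, a step you compress into ``reassembling the sum over $n$''. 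Your explanation of why $\psi=1$ yields a degree-one polynomial while non-quadratic $\psi$ yields a constant, and your remark that the root number is constant on the progression because $r\equiv 0\pmod q$, are both correct and match the paper.

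The genuine gap is in your proof of \eqref{zeros}. You assert that if a prime $p$ divides $(l,r)$ then $\chi_{8d}(l)=0$ identically on the progression. This holds only for $p=2$. For an odd prime $p\mid(l,r)$ we have $p\mid r$ and $(h,r)=1$, hence $p\nmid d$ for every $d\equiv h\pmod r$; since also $p\nmid 8$, the symbol $\chi_{8d}(p)=\left(\frac{8d}{p}\right)=\left(\frac{8h}{p}\right)=\pm 1$ is a \emph{nonzero constant} on the progression, not zero. Worse for your argument, in your own diagonal computation the terms with $ln=\square$ carry $\chi_{8d}(ln)=\mathbb{1}\left[(ln,8d)=1\right]$, and at such a prime $p$ the condition $(d,p)=1$ is automatic, so the local density factor is $1$ rather than $0$: your diagonal does not visibly vanish, and the argument as written does not establish $C_{h,l,r,\psi,\Phi}=P_{h,l,r,\Phi}=0$ for odd common divisors. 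In the paper the vanishing is read off from the Kronecker factor $\left(\frac{8r/(\alpha^2,r)}{ln}\right)$ produced by the Poisson summation formula (equivalently, from the principal-character factor $\varphi_0(l_1)$ in the diagonal main term \eqref{an0}); this is exactly the subtle point that your restriction to a progression creates, because $\chi_{8d}(p)$ no longer oscillates over the family as it does in Soundararajan's unrestricted setting. You need either to supply a correct mechanism for the cancellation in your framework or to confront the fact that your (otherwise correct) diagonal computation appears to produce a nonzero main term in this case.
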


\begin{theorem}\label{th2}If $\psi$ is a fixed even primitive non-quadratic character of odd modulus $q$ then, under the same conditions on $h,l,r$ and $Y$ as in Theorem \ref{th1}, there is a polynomial $\mathcal{P}_{h,l,r,\psi,\Phi}$ of degree $1$ such that 
\begin{eqnarray}\label{mom4}
\frac{1}{X}\sum_{\substack{d\equiv h \pmod r\\d\textrm{ \emph{squarefree}}}}\chi_{8d}\left(l\right)\left|L(1/2,\psi\otimes\chi_{8d})\right|^2\Phi\left(\frac{d}{X}\right)=\mathcal{P}_{h,l,r,\psi,\Phi}(\log X)+\mathcal{E}_{l,r,\Phi,X,Y}\nonumber\\
\end{eqnarray}
where 
\begin{eqnarray}\label{th2errorbound}
\mathcal{E}_{l,r,\Phi,X,Y} \ll_{\epsilon,\Phi} \frac{l^{\epsilon-1/2}r^{\epsilon-1}}{X^{1/2-\epsilon}} +\frac{l^{1+\epsilon}r^{2+\epsilon}Y^{1+\epsilon}}{X^{3/8-\epsilon}} +\frac{X^{\epsilon}}{Y^{}}+\frac{l^{\epsilon}r^{\epsilon-2}}{Y^{1-\epsilon}}
\end{eqnarray}
and $\mathcal{P}_{h,l,r,\psi,\Phi}=0$ if $(l,r)>1$.
\end{theorem}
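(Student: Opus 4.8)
The plan is to prove Theorem \ref{th2} by adapting the approximate functional equation plus Poisson summation strategy used for the first moment, but now carrying the extra complications coming from the absolute square of the $L$-value and from the restriction $d\equiv h\pmod r$. First I would write $|L(1/2,\psi\otimes\chi_{8d})|^2$ via an approximate functional equation as a rapidly convergent sum over $n_1,n_2$ of $\psi(n_1)\overline{\psi(n_2)}\chi_{8d}(n_1 n_2)/\sqrt{n_1 n_2}$ with a smooth cutoff $V((n_1 n_2)/(8d|q|))$ (here using that $\psi$ is even so the root number is the same for $\psi$ and $\overline\psi$ and the conductor is $8|d|q^2$). Inserting $\chi_{8d}(l)$ and summing over $d\equiv h\pmod r$ squarefree, I would detect the squarefree condition by $\sum_{a^2\mid d}\mu(a)$, truncating $a$ at a parameter $Y$ (the tail beyond $Y$ contributes the $X^\epsilon/Y$ term in \eqref{th2errorbound}), and then swap the order of summation so that the inner sum is over $d$ in a fixed residue class modulo $\lcm(r,a^2,8)$ with the character $\chi_{8d}(ln_1 n_2)$. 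This inner character sum over $d$ is where Poisson summation in arithmetic progressions enters.

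Next I would apply the Poisson summation formula for the quadratic character $d\mapsto\chi_{8d}(m)$ restricted to an arithmetic progression modulo $2\lcm(r,a^2)$ (the version Soundararajan uses, giving a dual sum over $k$ weighted by a Gauss-type sum $G_k(m)$ and the Fourier transform $\widehat\Phi$ of the test function). The $k=0$ frequency produces the diagonal/main term: it forces $m=l n_1 n_2$ to be a perfect square, so $l n_1 n_2 = \square$, and summing the resulting arithmetic factor against $V$ over $n_1,n_2$ and over $a\le Y$ yields, by a standard contour-shift of the associated Dirichlet series (which has a double pole at $s=1$ because the square condition on $n_1 n_2$ mimics $\zeta(2s)^2$-type behaviour after accounting for $\psi$), a polynomial in $\log X$ of degree $1$; this is $\mathcal{P}_{h,l,r,\psi,\Phi}(\log X)$. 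The vanishing when $(l,r)>1$ falls out here because a prime $p\mid(l,r)$ forces $\chi_{8d}(l)=0$ for every $d$ coprime to $r$, so the whole sum is identically zero. The $k\ne0$ frequencies form the off-diagonal contribution and must all be shown to be absorbed into the error term \eqref{th2errorbound}; bounding $G_k(m)$ trivially and using $\widehat\Phi$'s rapid decay to truncate $k\ll (r a^2/X)^{1+\epsilon}\cdot(\text{something})$, together with the bilinear sum over $n_1,n_2$ up to roughly $X^{1+\epsilon}$, produces the $l^{1+\epsilon}r^{2+\epsilon}Y^{1+\epsilon}/X^{3/8-\epsilon}$ and $l^\epsilon r^{\epsilon-2}/Y^{1-\epsilon}$ terms after optimising, while the main-term tail in the AFE beyond length $\sim X^{1+\epsilon}$ and the error in replacing $V$ by its main behaviour give $l^{\epsilon-1/2}r^{\epsilon-1}/X^{1/2-\epsilon}$.

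The main obstacle will be the treatment of the off-diagonal ($k\ne 0$) terms: unlike in Soundararajan's \cite{S1} setting where $\psi$ non-quadratic made these negligible outright, the restriction to $d\equiv h\pmod r$ couples the modulus $r$ into the Gauss sums $G_k(\cdot)$ and changes which $k$ and which $m=l n_1 n_2$ can produce a non-oscillating contribution, so one must carefully separate out any residual main-term-like pieces (when $m/\gcd$ is a square) and show the genuinely oscillating remainder is small. This requires a delicate analysis of the multiplicative structure of $G_k(l n_1 n_2)$ modulo $r$, $a^2$ and $8$ — in particular detecting when the quadratic Gauss sum fails to vanish — and then estimating the resulting double sum over $n_1,n_2$ with the character twist, which is the technical heart of the paper and the reason the error term in \eqref{th2errorbound} has the shape it does. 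Uniformity in $l$ is maintained throughout by keeping the $l$-dependence explicit in every Gauss sum bound and in the length of the AFE, at the cost of the stated powers of $l$ in \eqref{th2errorbound}.
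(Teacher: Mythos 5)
Your skeleton — approximate functional equation, Möbius detection of the squarefree condition truncated at $Y$, Poisson summation in arithmetic progressions, diagonal main term from the zero frequency with a double pole giving the degree-one polynomial, and vanishing when $(l,r)>1$ — matches the paper's route. But there is a genuine gap at the heart of the argument: you assert that the $k\neq 0$ frequencies "must all be shown to be absorbed into the error term," and this is false; it is precisely the point of the theorem (and the reason the abstract advertises "non-diagonal terms that require significantly more detailed analysis") that the nonzero frequencies with $k$ a perfect square produce a \emph{non-negligible non-diagonal main term} $\mathcal{N}_{h,l,r,\psi,\Phi}\ll r^{\epsilon-2}$, which enters as part of the constant term of the polynomial $\mathcal{P}_{h,l,r,\psi,\Phi}$ rather than the error. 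The mechanism you are missing is this: after Poisson summation the $k$th term carries the oscillating factor $e\bigl(kh\overline{\alpha^2 ln}/r\bigr)$, and the paper expands the resulting $r$-periodic cosine and sine in $\overline{n}$ as linear combinations of Dirichlet characters $\varphi\pmod r$. The Dirichlet series in $n$ then factors through $L\bigl(s,\psi\varphi(\tfrac{k_1r/2}{\cdot})\bigr)L\bigl(s,\overline\psi\varphi(\tfrac{k_1r/2}{\cdot})\bigr)$, where $k_1$ is the fundamental discriminant attached to $4k$, and one must determine exactly when one of these twisted characters is principal: this happens iff $k_1=1$ (i.e.\ $k=\square$) and $\varphi=\varphi_0\overline\psi\chi_{r/2}$ or $\varphi_0\psi\chi_{r/2}$ (this uses $r$ even and squarefree). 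The residue of the ensuing simple pole at $s=1$, summed over square $k$, $m\in\mathcal{S}(r)$ and $\alpha$, is the constant $\mathcal{N}_{h,l,r,\psi,\Phi}$. Your parenthetical "when $m/\gcd$ is a square" puts the square condition on the wrong variable, and nothing in your plan would compute this residue contribution, so your candidate polynomial would simply be wrong (off by a constant of size $r^{\epsilon-2}$).

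A symptom of the same gap is your attribution of the term $l^{\epsilon}r^{\epsilon-2}/Y^{1-\epsilon}$ in \eqref{th2errorbound} to trivial bounds on the oscillating off-diagonal sum. In the paper that term arises only after the non-diagonal main term has been isolated: it is the cost of completing the truncated sum over $\alpha\leq Y$ to an infinite Euler product inside $\mathcal{P}_2$, i.e.\ of replacing $\mathcal{P}_2$ by the $X$-independent constant $\mathcal{N}_{h,l,r,\psi,\Phi}$. The genuinely oscillating remainder (the contour-shifted integral with the non-principal twists) is what produces $l^{1+\epsilon}r^{2+\epsilon}Y^{1+\epsilon}/X^{3/8-\epsilon}$, via bounds on Soundararajan's function $f(\xi,s)$ and a convexity bound for the twisted $L$-functions on the shifted line. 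Without the character expansion and the residue computation you cannot account for either of these two error terms, nor for the constant term of the polynomial.
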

Theorems \ref{th1} and \ref{th2} are proved in sections \ref{ev} and \ref{sm} respectively. We broadly follow the methodology of Soundararajan's later work \cite{S2}, but we use a version of the Poisson summation formula that picks out the arithmetic progression that is also used by Radziwill and Soundararajan in \cite{SR} (actually we prove a more general version in Section \ref{pois}). The non-diagonal terms arising from the Poisson summation formula involve certain trigonometric functions on the group $\mathbb{Z}^{\times}_r$. Trivial bounds suffice to show that their contribution to the expected values is negligible, which is done in Section \ref{bn}. To handle the second moments, in Section \ref{exp} we expand the trigonometric functions as linear combinations of Dirichlet characters (mod $r$) which reveals non-negligible non-diagonal contributions. Specifically, the non-diagonal terms contribute constant terms $\mathcal{N}_{h,l,r,\psi,\Phi}\ll r^{\epsilon-2}$  to the polynomials $\mathcal{P}_{h,l,r,\psi,\Phi}$, and we note that the expected value of $\mathcal{N}_{h,l,r,\psi,\Phi}$ over the group $h\in\mathbb{Z}^{\times}_r$ is always zero. For instance, when $r=2q$, this expected value recovers the second moments of the families studied by Soundararajan. In the course of our proof of Theorem \ref{th2} we derive an integral representation for the non-diagonal main terms. In particular, in Section \ref{quarticderiv} we show that when $\psi$ is a quartic character the integral evaluates as 
\begin{eqnarray}\label{quarticterm}
\mathcal{N}_{h,1,2q,\psi,\Phi}&=&\frac{\widehat\Phi(0)L\left(1, \varphi_0\psi^2 \right) \Re\tau\left(\overline\psi\right)}{q^2\left(1\pm q\right)}
\nonumber\\
&\times&\left(\mathcal{K}_{\psi^2}'(0;1,2q)+\left(\gamma+\frac{2\Gamma'\left(\frac{1}{4}\right)}{\Gamma\left(\frac{1}{4}\right)}+\log\left(\frac{8q}{\pi^2}\right)+\frac{\pi}{2}\right)\mathcal{K}_{\psi^2}(0;1,2q)\right)\nonumber
\end{eqnarray}
where $\varphi_0$ is the principal character (mod $r$), $\tau$ is the Gauss sum, `$\pm$' depends on whether $q$ is congruent to $1$ or $3$ (mod $4$) and
\begin{eqnarray}\label{keuler6}
\mathcal{K}_{\psi^2}(s;1,2q)&=&2^{1-2s}\psi^2(2)L\left(2s,\psi^2\right)L\left(2s+1,\psi^2\right)\left(1-\frac{\psi^2(2)}{2^{1+2s}}\right)\left(1-\frac{\psi^2(2)}{2^{1-2s}}\right)\nonumber\\
&\times &\prod_{p\textrm{ odd}}\left(1-\frac{1}{p^{}}\right)\left(1-\frac{\psi^2(p)}{p^{}}\right)\left(1+\frac{1+\psi^2(p)}{p} +\frac{1}{p^3} -\frac{\psi^2(p)}{p^2}\left(\frac{1}{p^{2s}}+p^{2s}    \right)   \right).\nonumber
\end{eqnarray}
Here the situation is somewhat analogous to the case $\psi=1$ studied by Soundararajan; the integral representation of $\mathcal{N}_{h,1,2q,\psi,\Phi}$ may be evaluated because the integrand satisfies an analogous functional equation in this case. It is not clear whether such functional equations exist otherwise. \\

Lastly we mention that, according to the philosophy of Katz and Sarnak \cite{KS}, the statistics of the low-lying zeros in the families $\mathcal{F}_{h,r}(\psi)$ follow a unitary law which lacks repulsion from $s=1/2$ and the degree of the polynomial in the asymptotic formula for the $2k$th moment is expected to be $k^2$ (in contrast to the symplectic case studied by Soundararajan in which the degree of the $k$th moment is expected to be $k(k+1)/2$). Yet, the combinatorial mechanism (\cite{CFKRS}, Lemma 2.5.2) is inapplicable in this case because $\psi$ is not a real character. It would be interesting to extend the CFKRS model to include these families. Another interesting question is whether or not the fourth moment can be computed unconditionally.

\section{Auxiliary results}
\subsection{Poisson summation}\label{pois}
In this work we must handle sums of the form
\begin{eqnarray} \label{ps}
\mathcal{S}_{h,r,X,Y}\left(\left(\frac{8\cdot}{s}\right),F\right)=\sum_{\substack{d\equiv h\pmod r}}M_Y(d)\left(\frac{8d}{s}\right)F\left(\frac{d}{X}\right)
\end{eqnarray}
where $s$ is an odd natural number, $(s,r)=1$, $X,Y> 1$, 
\begin{eqnarray}\label{MY}
M_Y(d)=\sum_{\substack{k^2|d\\k\leq Y}}\mu(k)\hspace{1cm}
\end{eqnarray}
and $F$ is a real-valued smooth function supported in the interval $(1,2)$. We now prove a version of the Poisson summation formula to cast (\ref{ps}) into a sum involving the Gauss-type sum
\begin{eqnarray}
G_k(s)=\left(\frac{1-i}{2}+\left(\frac{-1}{s}\right)\frac{1+i}{2}\right)\tau_{k}(s)
\end{eqnarray}
where
 \begin{eqnarray}\label{gauss}
\tau_{k}(s)=\sum_{b\pmod s}\left(\frac{b}{s}\right)e\left(\frac{bk}{s}\right)
\end{eqnarray}
and the Fourier transform is
\begin{eqnarray}\label{fouriertdef}
\widehat F(x)=\int_{-\infty}^{\infty}F(y)e(-xy) dy,
\end{eqnarray}
where $e(x)=e^{2\pi i x}$ is the complex exponential.

\begin{lemma}\label{poisson} If $s$ is an odd natural number, $(r,s)=1$,  $X,Y>1$ and $F$ is a real-valued smooth function  supported in the interval $(1,2)$, then 
\begin{eqnarray}
\mathcal{S}_{h,r,X,Y}\left(\left(\frac{8\cdot}{s}\right),F\right)
&=&\frac{X}{rs}\sum_{\substack{\alpha\leq \min(\sqrt{2X},Y)\\(\alpha,s)=1\\(\alpha^2,r)|h}}\frac{(\alpha^2,r)\mu(\alpha)}{\alpha^2}\left(\frac{8r/(\alpha^2,r)}{s}\right)\nonumber\\
&\times& \sum_kG_k(s) \Re\left((1+i)  \widehat F\left(\frac{(\alpha^2,r)kX}{\alpha^2rs}\right)e\left(\frac{\overline{\alpha^2/(\alpha^2,r)}kh\overline s}{r}\right)\right)\nonumber
\end{eqnarray}
where $\overline{s}$ denotes the multiplicative inverse of $s \pmod {r/(\alpha^2,r)}$.
\end{lemma}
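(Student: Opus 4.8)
The plan is to establish Lemma~\ref{poisson} by first reducing the squarefree indicator $M_Y(d)$ to a sum over $\alpha \le Y$ with $\alpha^2 \mid d$, then applying a Poisson summation formula along the arithmetic progression $d \equiv h \pmod{r}$ (refined by the congruence $d \equiv 0 \pmod{\alpha^2}$) to the remaining sum over $d$. Concretely, writing $d = \alpha^2 e$, the condition $d \equiv h \pmod r$ becomes a congruence condition on $e$ modulo $r/(\alpha^2,r)$ which is solvable precisely when $(\alpha^2,r) \mid h$, contributing the factor $\overline{\alpha^2/(\alpha^2,r)} \, h \, \overline{s}$ in the exponential after we track the inverse of $\alpha^2$. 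The character $\left(\tfrac{8d}{s}\right) = \left(\tfrac{8\alpha^2 e}{s}\right) = \left(\tfrac{8}{s}\right)\left(\tfrac{\alpha^2}{s}\right)\left(\tfrac{e}{s}\right)$ splits off the factor $\left(\tfrac{\alpha}{s}\right)^2 = 1$ when $(\alpha,s)=1$ (and vanishes otherwise, forcing $(\alpha,s)=1$), so the surviving character in $e$ is the primitive-ish symbol $\left(\tfrac{8 e}{s}\right)$; combined with the progression modulus $r/(\alpha^2,r)$ coprime to $s$, one gets a complete character sum modulo $rs/(\alpha^2,r)$ whose Fourier coefficients factor by the Chinese Remainder Theorem into a Gauss sum mod $s$ and an additive character mod $r/(\alpha^2,r)$.

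Next I would carry out the Poisson summation itself. Set $N = rs/(\alpha^2,r)$; then $e$ ranges over a residue class mod $N$, and applying Poisson summation to $F(\alpha^2 e / X)$ gives a sum over frequencies $k$ of $\tfrac{X}{\alpha^2 N} \widehat F\!\left(\tfrac{kX}{\alpha^2 N}\right)$ times the complete exponential sum $\sum_{e \bmod N} \left(\tfrac{8e}{s}\right) \mathbf{1}_{e \equiv (\cdots)} e(ke/N)$. This complete sum factors: the part mod $s$ yields $\left(\tfrac{8}{s}\right)$ times $\tau_k(s) = \sum_{b \bmod s} \left(\tfrac{b}{s}\right) e(bk/s)$ after a change of variable absorbing the CRT coprimality inverses (which is where the $\left(\tfrac{8r/(\alpha^2,r)}{s}\right)$ type factor and the inverse of $\alpha^2/(\alpha^2,r)$ in the argument of $\tau_k$ get reorganized), and the part mod $r/(\alpha^2,r)$ yields the single additive character $e\!\left(\tfrac{\overline{\alpha^2/(\alpha^2,r)} k h \overline{s}}{r}\right)$. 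Substituting $N = rs/(\alpha^2,r)$ turns the prefactor $\tfrac{X}{\alpha^2 N}$ into $\tfrac{(\alpha^2,r) X}{\alpha^2 r s}$, matching the argument of $\widehat F$ in the statement, and the prefactor $\tfrac{(\alpha^2,r)\mu(\alpha)}{\alpha^2}$ emerges after combining with the $\mu(\alpha)$ from $M_Y$ and summing over $\alpha \le \min(\sqrt{2X}, Y)$ — the cap $\sqrt{2X}$ coming from $F$ being supported in $(1,2)$ so that $d < 2X$ forces $\alpha^2 < 2X$.

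Finally I would handle the passage to the real part and the appearance of $G_k(s)$. The Gauss-type sum $\tau_k(s)$ over the real character $\left(\tfrac{\cdot}{s}\right)$ is, for $s$ odd, equal to $\left(\tfrac{k}{s}\right) \tau_1(s)$ up to the usual ramification, and $\tau_1(s)$ is $\sqrt{s}$ or $i\sqrt{s}$ according to $s \equiv 1$ or $3 \pmod 4$; this quadratic Gauss sum evaluation is exactly what is encoded in the factor $\left(\tfrac{1-i}{2} + \left(\tfrac{-1}{s}\right)\tfrac{1+i}{2}\right)$ multiplying $\tau_k(s)$ in the definition of $G_k(s)$, together with the $\tfrac{1-i}{2}$ normalization that the $(1+i)$ inside $\Re(\cdots)$ compensates. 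Because $F$ is real-valued, pairing the frequencies $k$ and $-k$ in the Poisson sum collapses the two into twice the real part of one, which accounts for the $\Re$ and the overall shape $\Re\big((1+i)\widehat F(\cdots) e(\cdots)\big)$. The main obstacle I anticipate is the careful bookkeeping of the multiplicative inverses under the Chinese Remainder Theorem — correctly identifying which inverse ($\overline{s}$ mod $r/(\alpha^2,r)$ versus inverses of $\alpha^2$ and of $r/(\alpha^2,r)$ mod $s$) lands where in the final exponentials and Jacobi symbols — since sign and normalization errors there are easy to make and hard to detect; everything else is a fairly mechanical, if lengthy, manipulation of complete character sums and Poisson summation. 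I would cross-check by specializing to $r = 1$ or $\alpha = 1$, where the formula should reduce to the classical Poisson formula for quadratic Dirichlet characters used by Soundararajan.
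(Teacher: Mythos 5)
Your proposal is correct and follows essentially the same route as the paper: sieve with $M_Y$ to reduce to $\alpha^2\mid d$ (forcing $(\alpha,s)=1$ and $(\alpha^2,r)\mid h$), apply Poisson summation to the resulting progression of modulus $rs/(\alpha^2,r)$, factor the complete exponential sum by the Chinese Remainder Theorem into $\tau_k(s)$ times an additive character mod $r/(\alpha^2,r)$, and pair $k$ with $-k$ using the reality of $F$ and $G_{-k}(s)=\left(\frac{-1}{s}\right)G_k(s)$ to produce the $\Re\left((1+i)\,\cdots\right)$ form. The only cosmetic difference is that the paper first splits $d$ into residues $b \bmod s$ and then Poisson-sums each progression mod $Rs$, whereas you Poisson-sum the periodic weight mod $N=Rs$ directly and CRT-factor afterwards; these are the same computation.
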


\begin{proof}
Note that (\ref{ps}) equals 
\begin{eqnarray}\label{ps2}
&&\left(\frac{8}{s}\right)\sum_{\substack{\alpha\leq \min(\sqrt{2X},Y)}}\mu(\alpha)\left(\frac{\alpha^2}{s}\right)\sum_{\substack{\alpha^2 d\equiv h \pmod r}}\left(\frac{d}{s}\right)F\left(\frac{\alpha^2d}{X}\right)\nonumber\\
&=&\left(\frac{8}{s}\right)\sum_{\substack{\alpha\leq \min(\sqrt{2X},Y)\\(\alpha,s)=1\\(\alpha^2,r)|h}}\mu(\alpha)\sum_{d\equiv H \pmod R}\left(\frac{d}{s}\right)F\left(\frac{\alpha^2d}{X}\right)
\end{eqnarray}
where
\begin{eqnarray}H=\overline{\frac{\alpha^2}{(\alpha^2,r)}}\frac{h}{(\alpha^2,r)}\hspace{0.7cm}\textrm{and}\hspace{0.7cm}R=\frac{r}{(\alpha^2,r)}.\nonumber
\end{eqnarray} 
Because $s$ is odd, the inner summation in (\ref{ps2}) is
\begin{eqnarray}\label{ps3}
\sum_{b\pmod s}\left(\frac{b}{s}\right)\sum_{ds \equiv H-b\pmod R}F\left(\frac{\alpha^2(ds+b)}{X}\right).
\end{eqnarray}
Since $(r,s)=1$, (\ref{ps3}) is 
\begin{eqnarray}\label{ps4}
&&\sum_{b\pmod s}\left(\frac{b}{s}\right)\sum_{d \equiv (H-b)\overline s\pmod R}F\left(\frac{\alpha^2(ds+b)}{X}\right)\nonumber\\
&=&\sum_{b\pmod s}\left(\frac{b}{s}\right)\sum_{d}F\left(\frac{\alpha^2Rs}{X}\left(d+\frac{(H-b)\overline s}{R}+\frac{b}{Rs}\right)\right).
\end{eqnarray}
Applying the classical Poisson summation formula to the inner summation in (\ref{ps4}), one obtains
\begin{eqnarray}\label{poiss}
\frac{X}{\alpha^2Rs}\sum_k\widehat F\left(\frac{kX}{\alpha^2Rs}\right)
e\left(\frac{kH\overline s}{R}\right)\sum_{b\pmod s}\left(\frac{b}{s}\right)e\left(\frac{bk}{Rs}\left(1-s\overline{s}\right)\right).
\end{eqnarray}
Replacing $b$ with $bR$ and using (\ref{gauss}), $G_{-k}(s)=(\frac{-1}{s})G_k(s)$ and $\widehat F(-u)=\overline{\widehat F}(u)$ (because $F$ is real), it follows that (\ref{poiss}) is
\begin{eqnarray}\label{ps5}
&&\frac{X}{\alpha^2Rs}\left(\frac{R}{s}\right)\sum_k\widehat F\left(\frac{kX}{\alpha^2Rs}\right)
e\left(\frac{kH\overline s}{R}\right)\tau_k(s)\nonumber\\
&=&\frac{X}{\alpha^2Rs}\left(\frac{R}{s}\right)\sum_kG_k(s) \Re\left((1+i)  \widehat F\left(\frac{kX}{\alpha^2Rs}\right)e\left(\frac{kH\overline s}{R}\right)\right).
\end{eqnarray}
Substituting (\ref{ps5}) in (\ref{ps2}) we obtain the lemma. \end{proof}

\subsection{The Gauss-type sum}
If $n$ is squarefree then $(\frac{\cdot}{n})$ is a primitive character with conductor $n$ in which case it is easy to see that $G_k(n)=(\frac{k}{n})\sqrt n$. For general odd $n$, Soundararajan (\cite{S2}, Lemma 2.3) has proved the following.

\begin{lemma}\label{gausssum}If $m$ and $n$ are coprime odd integers then $G_k(m)G_k(n)=G_k(mn)$. If $p^{\alpha}$ is the largest power of $p$ dividing $k$ \emph{(}if $k=0$ then $\alpha=\infty$\emph{)} then for $\beta\geq 1$
\begin{eqnarray}\label{hologauss}
  G_k(p^{\beta}) = \begin{cases}
    0 &  \text{if $\beta\leq \alpha$ is odd},
    \\\\
    \phi(p^{\beta})& \text{if $\beta\leq \alpha$ is even} ,
    \\ \\
     -p^{\alpha} & \text{if $\beta= \alpha+1$ is even},
     \\ \\
     \left(\frac{kp^{-\alpha}}{p}\right)p^{\alpha}\sqrt p & \text{if $\beta= \alpha+1$ is odd},
     \\\\
     0 & \text{if $\beta\geq\alpha+2$.}
    \end{cases}\nonumber\\
\end{eqnarray}
\end{lemma}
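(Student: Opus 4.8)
The plan is to establish the multiplicativity first and then reduce the prime-power evaluation to a classical Gauss sum computation, treating $k=0$ and $k\neq0$ in parallel via the convention $\alpha=\infty$. For the multiplicativity $G_k(m)G_k(n)=G_k(mn)$ when $(m,n)=1$, I would split into the Jacobi-symbol factor and the $\tau_k$ factor. The Jacobi symbol $\left(\frac{\cdot}{mn}\right)=\left(\frac{\cdot}{m}\right)\left(\frac{\cdot}{n}\right)$ is completely multiplicative in the lower argument, and the CRT identification $\Z/mn\Z\cong\Z/m\Z\times\Z/n\Z$ together with the additivity of $e(\cdot)$ gives $\tau_k(mn)=\left(\frac{n}{m}\right)\left(\frac{m}{n}\right)\tau_k(m)\tau_k(n)$ after writing $b\equiv n\overline n b_m + m\overline m b_n$ and changing variables; the extra quadratic-reciprocity factor $\left(\frac{n}{m}\right)\left(\frac{m}{n}\right)=\left(\frac{mn \cdot \text{stuff}}{\cdot}\right)$ then combines with the factor $\tfrac{1-i}{2}+\left(\frac{-1}{s}\right)\tfrac{1+i}{2}$ in the definition of $G_k$ — this is exactly where the specific shape of that prefactor is designed to make everything multiplicative, using $\left(\frac{-1}{m}\right)\left(\frac{-1}{n}\right)=\left(\frac{-1}{mn}\right)$ and reciprocity to absorb the cross terms. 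So the bookkeeping of $\pm i$ factors is the only delicate point here.

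For the prime-power case $G_k(p^\beta)$ with $\beta\geq1$, I would compute $\tau_k(p^\beta)=\sum_{b\bmod p^\beta}\left(\frac{b}{p^\beta}\right)e\!\left(\frac{bk}{p^\beta}\right)$ directly. Write $\left(\frac{b}{p^\beta}\right)=\left(\frac{b}{p}\right)^\beta$, so when $\beta$ is even the character is just the principal character mod $p$ and the sum is a Ramanujan-type sum: it equals $\phi(p^\beta)$ if $p^\beta\mid k$, equals $-p^{\beta-1}$ if $p^{\beta-1}\,\|\,k$, and vanishes otherwise — matching the $\beta\leq\alpha$ even, $\beta=\alpha+1$ even, and $\beta\geq\alpha+2$ lines. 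When $\beta$ is odd the character is $\left(\frac{\cdot}{p}\right)$ lifted mod $p^\beta$; factoring $b = b_0 + p^{\beta-1}t$ (or rather pulling out the exact power of $p$ dividing $b$) one sees the sum vanishes unless $k$ is divisible by exactly $p^{\beta-1}$ or more. If $p^\beta\mid k$ the exponential is trivial and $\sum_{b}\left(\frac{b}{p}\right)=0$ over a full set of residues, so we get $0$ (the $\beta\leq\alpha$ odd line). If $p^{\beta-1}\,\|\,k$, writing $b=p^{\beta-1}c$ fails since then $\left(\frac{b}{p}\right)=0$; instead the surviving contribution comes from $b$ coprime to $p$, and a standard substitution reduces $\sum_{(b,p)=1}\left(\frac{b}{p}\right)e\!\left(\frac{bk}{p^\beta}\right)$ to $\left(\frac{k p^{-(\beta-1)}}{p}\right)$ times the classical quadratic Gauss sum $g(1,p)$, whose value is $\sqrt p$ or $i\sqrt p$ according to $p\bmod4$; that $i$ is then cancelled precisely by the $G_k$ prefactor, leaving $\left(\frac{kp^{-\alpha}}{p}\right)p^{\alpha}\sqrt p$ (the $\beta=\alpha+1$ odd line). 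The remaining case, $\beta=\alpha+2$ or more with the appropriate parity, folds into the "$\geq\alpha+2$" vanishing once one checks the exponential sum over a residue in the top coordinate vanishes.

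I expect the main obstacle to be the careful tracking of the $i$-power / quadratic-reciprocity factors: the prefactor $\tfrac{1-i}{2}+\left(\frac{-1}{s}\right)\tfrac{1+i}{2}$ equals $\epsilon_s^{-1}$ where $\epsilon_s=1$ or $i$ depending on $s\bmod 4$, and one must verify that $\tau_k(p^\beta)$ carries exactly the compensating $\epsilon_{p^\beta}$ so that $G_k(p^\beta)$ comes out real (indeed rational in the even case and $\sqrt p$ times a sign in the odd case). Since this lemma is quoted verbatim from Soundararajan \cite{S2}, Lemma 2.3, I would in fact simply cite it, and present the above only as the sketch of why it holds; the one genuinely new feature relative to the primitive case $G_k(n)=\left(\frac{k}{n}\right)\sqrt n$ is the appearance of the "one power off" terms $-p^\alpha$ and $\phi(p^\beta)$, which are the Ramanujan-sum artifacts of the character degenerating to principal at even exponents.
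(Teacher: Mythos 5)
Your proposal is correct and matches what the paper actually does: the paper offers no proof of this lemma, simply quoting it as Soundararajan's Lemma~2.3 of \cite{S2}, and you likewise propose to cite it, with a sketch that is sound (the prefactor $\frac{1-i}{2}+\left(\frac{-1}{s}\right)\frac{1+i}{2}$ equals $1$ or $-i$ according to $s\bmod 4$, and it does absorb both the Jacobi reciprocity factor $\left(\frac{m}{n}\right)\left(\frac{n}{m}\right)$ in the multiplicativity step and the $\epsilon_p$ from the classical Gauss sum in the $\beta=\alpha+1$ odd case, while the even-$\beta$ lines are exactly the Ramanujan sum $c_{p^{\beta}}(k)$).
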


\subsection{Approximate functional equations}\label{apeqs}
In this section we recall the `approximate' functional equations that we require for $L\left(\frac{1}{2},\psi\otimes\chi_{8d}\right)$ and $|L\left(\frac{1}{2},\psi\otimes\chi_{8d}\right)|^2$. Firstly, we need the following definition.
\begin{definition}\label{def1} For $\eta,\xi>0$ and $j=1$ or $j=2$,
\begin{eqnarray}
\omega_j(\xi)=\frac{1}{2\pi i}\int_{(c>0)}\left(\frac{\Gamma\left(\frac{s}{2}+\frac{1}{4}\right)}{\Gamma\left(\frac{1}{4}\right)}\right)^j\xi^{-s}\frac{ds}{s}
\end{eqnarray}
and 
\begin{eqnarray}\label{Fdef}
F_{j,\eta}(\xi)=\Phi(\xi)\omega_j\left(\eta\left(\frac{\pi}{8qX\xi}\right)^{j/2}\right).
\end{eqnarray}
\end{definition}
\noindent Then Soundararajan has proved the following. 
\begin{lemma}\label{xibounds}For any positive integer $\nu$ we have
\begin{eqnarray}
\omega_j^{(\nu)}(\xi)\ll_{\nu,j}\xi^{2\nu+2}\exp\left(-j\xi^{2/j}\right)\ll_{\nu,j} \exp\left(-\frac{j\xi^{2/j}}{2}\right)
\end{eqnarray}
as $\xi\rightarrow\infty$. Moreover, for any fixed $\nu>0$ there is a $C>0$ such that
\begin{eqnarray}\label{fbound}
\widehat F_{j,\eta}(\xi)\ll_{\nu,\Phi}\exp\left(-\frac{C\eta^{2/j}}{X}\right)\left|\xi\right|^{-\nu}
\end{eqnarray}
as $\eta\rightarrow\infty$ or $\xi\rightarrow\infty$.
 \end{lemma}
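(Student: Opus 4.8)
The plan is to establish the bound on $\omega_j^{(\nu)}$ by differentiating under the integral sign and shifting contours, and then to deduce the bound on $\widehat F_{j,\eta}$ by repeated integration by parts. First I would observe that differentiating the integral representation of $\omega_j$ with respect to $\xi$ brings down factors of $s$: we have
\begin{eqnarray}
\omega_j^{(\nu)}(\xi)=\frac{(-1)^{\nu}}{2\pi i}\int_{(c)}\left(\frac{\Gamma\left(\frac{s}{2}+\frac{1}{4}\right)}{\Gamma\left(\frac{1}{4}\right)}\right)^j\xi^{-s-\nu}\,s(s+1)\cdots(s+\nu-1)\,\frac{ds}{s}.\nonumber
\end{eqnarray}
To obtain decay in $\xi$ one moves the contour to the right, to $\mathrm{Re}(s)=c$ with $c$ large; since the only pole of the integrand in $\mathrm{Re}(s)>0$ would be at $s=0$ (cancelled once $\nu\geq1$, or accounted for by a residue when $\nu=0$, though here $\nu\geq1$ is assumed), no poles are crossed, and Stirling's formula $|\Gamma(\sigma+it)|\ll e^{-\pi|t|/2}|t|^{\sigma-1/2}$ ensures absolute convergence on each vertical line and justifies the shift. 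On the line $\mathrm{Re}(s)=c$ the integrand is $\ll \xi^{-c-\nu}\cdot e^{-jc}c^{jc}$ times a convergent $t$-integral (using the Gamma decay to absorb the polynomial factor $s\cdots(s+\nu-1)$), so $\omega_j^{(\nu)}(\xi)\ll_{\nu,j}\xi^{-c-\nu}e^{-jc}c^{jc/2}$ roughly. Optimizing over $c$: choosing $c$ proportional to $\xi^{2/j}$ balances $\xi^{-c}$ against $c^{jc/2}$ and produces the stated $\ll \xi^{2\nu+2}\exp(-j\xi^{2/j})$, and the cruder bound $\ll \exp(-\tfrac12 j\xi^{2/j})$ follows by absorbing the polynomial factor into a slightly smaller exponent for large $\xi$. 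The exponent $2/j$ is the familiar one: it reflects that a product of $j$ Gamma factors has inverse Mellin transform decaying like $\exp(-j x^{2/j})$.

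Next, for the transform bound I would write out, from Definition~\ref{def1},
\begin{eqnarray}
F_{j,\eta}(\xi)=\Phi(\xi)\,\omega_j\!\left(\eta\left(\frac{\pi}{8qX\xi}\right)^{j/2}\right),\nonumber
\end{eqnarray}
a smooth function supported in $(1,2)$. The transform $\widehat F_{j,\eta}(\xi)=\int F_{j,\eta}(y)e(-\xi y)\,dy$ is handled by integrating by parts $\nu$ times, which gives $\widehat F_{j,\eta}(\xi)\ll_\nu |\xi|^{-\nu}\int_1^2 |F_{j,\eta}^{(\nu)}(y)|\,dy$. Differentiating the composition via the chain rule produces a sum of terms involving $\Phi^{(a)}(y)$ and $\omega_j^{(b)}$ evaluated at the argument $\eta(\pi/8qXy)^{j/2}$, multiplied by powers of $y^{-1}$ and of $\eta(\pi/8qX)^{j/2}$. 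Since $y\in(1,2)$ is bounded above and below, the argument of $\omega_j^{(b)}$ is $\asymp \eta/X^{j/2}$ up to constants depending on $q$, so by the first part each $\omega_j^{(b)}$-factor is $\ll \exp(-\tfrac12 j (\eta/X^{j/2})^{2/j})=\exp(-c\eta^{2/j}/X)$ for a constant $c=c(q)>0$; this exponential dominates the polynomial-in-$\eta$ prefactors as $\eta\to\infty$, and for $\eta$ bounded the whole thing is just $\ll_{\nu,\Phi} 1$, consistent with the claimed form. Collecting terms yields $\widehat F_{j,\eta}(\xi)\ll_{\nu,\Phi}\exp(-C\eta^{2/j}/X)|\xi|^{-\nu}$, with the decay in $\xi$ coming from the integration by parts and the decay in $\eta$ inherited from $\omega_j$.

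The main obstacle I anticipate is bookkeeping rather than anything conceptually deep: namely, being careful with the interplay of the two parameters $\eta$ and $\xi$ (and the auxiliary $X$), and in particular justifying that the bound holds uniformly and takes the clean product form $\exp(-C\eta^{2/j}/X)|\xi|^{-\nu}$ as \emph{either} $\eta\to\infty$ \emph{or} $\xi\to\infty$. One must check that the polynomial factors in $\eta$ generated by the chain rule (coming from powers of $\eta(\pi/8qX)^{j/2}$) are genuinely swamped by the Gaussian-type decay $\exp(-c\eta^{2/j})$, which they are since $2/j>0$, and that when $\eta$ stays bounded the implicit constant does not blow up — here one uses that $\omega_j$ and all its derivatives are continuous and bounded on compact subsets of $(0,\infty)$, and that the argument stays bounded away from $0$ because $y\in(1,2)$. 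A minor technical point is the contour-shift justification for $\omega_j^{(\nu)}$: one must confirm the integrand decays fast enough on horizontal segments as $|\mathrm{Im}(s)|\to\infty$ to push the contour, which again is immediate from Stirling. None of these steps is hard, but the optimization of $c\asymp\xi^{2/j}$ in the first part, and the propagation of the resulting exponent through the chain rule in the second, are where care is needed to land exactly on the exponents $2\nu+2$ and $2/j$ quoted in the statement.
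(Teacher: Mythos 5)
The paper itself gives no proof of this lemma (it is quoted from Soundararajan, \cite{S2} Lemma 2.1), and your argument reproduces the standard one from that source: differentiate under the integral, shift the contour to $\mathrm{Re}(s)=c\asymp\xi^{2/j}$ and apply Stirling for the first bound, then integrate by parts $\nu$ times and feed in the first bound for the second. The one imprecise point is your claim that the argument $u(y)=\eta(\pi/8qXy)^{j/2}$ of $\omega_j$ ``stays bounded away from $0$ because $y\in(1,2)$'' --- it tends to $0$ whenever $\eta=o(X^{j/2})$, which occurs in the application, and $\omega_j^{(k)}(u)$ blows up like $u^{1/2-k}$ there; but each such term carries chain-rule factors $\asymp u^{k}$ (since every derivative of $u$ is $\asymp u$ on $(1,2)$), so the products are $\ll u^{1/2}$ and the uniform bound survives.
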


We also denote by $\tau(\chi)$ the Gauss sum 
\begin{eqnarray}
\tau(\chi)=\sum_{a\pmod N}\chi(a)e\left(\frac{a}{N}\right).
\end{eqnarray}
The approximate functional equations that we require are as follows.
\begin{lemma}\label{dlem}
Let $\psi=1$ or $\psi$ be an even primitive non-quadratic character of odd modulus $q$ and let $\chi_{8d}$ be a primitive quadratic character. Also  let $h$ be odd, $r$ even, $q|r$, $d\equiv h\pmod r$, $(h,r)=1$, 
\begin{eqnarray}\mathcal{S}(r)=\{m\in\mathbb{N}:p|m\Rightarrow p|r\},
\end{eqnarray} 
\begin{eqnarray}
\epsilon(d)=\frac{\tau(\psi)}{q^{1/2}}\psi(8d)\chi_{8d}(q)=\epsilon(h),
\end{eqnarray}
and 
\begin{eqnarray}\label{dddef}
d_{\psi}(n)=\sum_{d|n}\psi(d)\overline\psi\left(\frac{n}{d}\right).
\end{eqnarray}
Then we have
\begin{eqnarray}\label{appeq5}
L\left(\frac{1}{2},\psi\otimes\chi_{8d}\right)&=&\sum_{m\in\mathcal{S}(r)}\sum_{(n,r)=1}\frac{\chi_{8h}(m)\chi_{8d}(n)}{(mn)^{1/2}}\left(\psi(m)\psi(n)
+\epsilon(h)\overline \psi(m)\overline \psi(n)\right)\nonumber\\
&\times&\omega_1\left(mn\sqrt\frac{\pi}{8dq}\right)
\end{eqnarray}
and
\begin{eqnarray} \label{appeq6}
\left|L\left(\frac{1}{2},\psi\otimes\chi_{8d}\right)\right|^2&=&2\sum_{m\in \mathcal{S}(r)}\frac{d_{\psi}(m) \chi_{8h}(m)}{m^{1/2}}\sum_{(n,r)=1}\frac{d_{\psi}(n)\chi_{8d}(n)}{n^{1/2}}\omega_2\left(\frac{mn\pi}{8dq}\right).\nonumber\\
\end{eqnarray}
\end{lemma}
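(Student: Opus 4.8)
The plan is to obtain both identities by the standard contour-shift derivation of an approximate functional equation. First I would record the relevant functional equations. Under the hypotheses ($d\equiv h\pmod r$, $(h,r)=1$, $q\mid r$, $r$ even) together with the supports involved, $d$ is a positive odd squarefree integer with $(d,q)=1$, so $8d$ is a fundamental discriminant and $\chi_{8d}$ is a primitive even quadratic character of conductor $8d$; since $\psi$ is primitive even of odd modulus $q$ with $(q,8d)=1$, the product $\psi\chi_{8d}$ is a primitive even character of conductor $8dq$. Hence $\Lambda(s,\psi\chi_{8d})=(8dq/\pi)^{s/2}\Gamma(s/2)L(s,\psi\chi_{8d})$ is entire and $\Lambda(s,\psi\chi_{8d})=\epsilon(d)\Lambda(1-s,\overline\psi\chi_{8d})$ with $\epsilon(d)=\tau(\psi\chi_{8d})/\sqrt{8dq}$; by multiplicativity of Gauss sums over coprime moduli $\tau(\psi\chi_{8d})=\psi(8d)\chi_{8d}(q)\tau(\psi)\tau(\chi_{8d})$ and $\tau(\chi_{8d})=\sqrt{8d}$, so $\epsilon(d)=q^{-1/2}\tau(\psi)\psi(8d)\chi_{8d}(q)$, which is the quantity in the statement. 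Because $q\mid r$ and $r$ is even, $\psi(8d)=\psi(8h)$, $\chi_{8d}(q)=\chi_{8h}(q)$, and $\chi_{8d}(m)=\chi_{8h}(m)$ for all $m\in\mathcal{S}(r)$ (both sides vanishing for even $m$), since the value of $\chi_{8d}$ at an odd prime $p\mid r$ depends only on $d\bmod p$; thus $\epsilon(d)=\epsilon(h)$ and the factor $\chi_{8d}(m)$ for $m\in\mathcal{S}(r)$ may be replaced by $\chi_{8h}(m)$.

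For (\ref{appeq5}) I would consider, on $\Re s=2$,
$$\mathcal{I}_\psi(d)=\frac{1}{2\pi i}\int_{(2)}\frac{\Gamma(\tfrac{s}{2}+\tfrac14)}{\Gamma(\tfrac14)}\Big(\frac{8dq}{\pi}\Big)^{s/2}L\big(\tfrac12+s,\psi\chi_{8d}\big)\,\frac{ds}{s}.$$
Expanding the Dirichlet series and integrating termwise (legitimate by absolute convergence on $\Re s=2$ and the exponential decay of $\Gamma$ in vertical strips, together with Definition \ref{def1}) gives $\mathcal{I}_\psi(d)=\sum_n\tfrac{\psi(n)\chi_{8d}(n)}{n^{1/2}}\omega_1(n\sqrt{\pi/8dq})$, and likewise for $\mathcal{I}_{\overline\psi}(d)$. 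On the other hand the integrand equals $(\pi/8dq)^{1/4}\Gamma(\tfrac14)^{-1}\Lambda(\tfrac12+s,\psi\chi_{8d})\,s^{-1}$, so moving the line to $\Re s=-2$ (the horizontal pieces vanishing by the growth of $\Lambda$) picks up at $s=0$ the residue $(\pi/8dq)^{1/4}\Gamma(\tfrac14)^{-1}\Lambda(\tfrac12,\psi\chi_{8d})=L(\tfrac12,\psi\chi_{8d})$, while in the remaining integral the change of variable $s\mapsto-s$ and the functional equation $\Lambda(\tfrac12-s,\psi\chi_{8d})=\epsilon(d)\Lambda(\tfrac12+s,\overline\psi\chi_{8d})$ produce $-\epsilon(d)\mathcal{I}_{\overline\psi}(d)$. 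Hence $L(\tfrac12,\psi\chi_{8d})=\mathcal{I}_\psi(d)+\epsilon(d)\mathcal{I}_{\overline\psi}(d)$; combining the two series, replacing $\epsilon(d)$ by $\epsilon(h)$, and writing each $n$ uniquely as $n=mn'$ with $m\in\mathcal{S}(r)$ and $(n',r)=1$ (complete multiplicativity of $\psi,\overline\psi,\chi_{8d}$ and $\chi_{8d}(m)=\chi_{8h}(m)$ on $\mathcal{S}(r)$) yields (\ref{appeq5}).

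The proof of (\ref{appeq6}) is parallel, applied to the degree-two product. Since $L(s,\overline\psi\chi_{8d})=\overline{L(\overline s,\psi\chi_{8d})}$ and $\chi_{8d}$ is real, $|L(\tfrac12,\psi\chi_{8d})|^2=L(\tfrac12,\psi\chi_{8d})L(\tfrac12,\overline\psi\chi_{8d})$, whose Dirichlet series has coefficients $d_\psi(n)\chi_{8d}(n)$. The completed product $\widetilde\Lambda(s)=(8dq/\pi)^{s}\Gamma(\tfrac{s}{2})^2L(s,\psi\chi_{8d})L(s,\overline\psi\chi_{8d})$ is entire and self-dual: because the root number of $\overline\psi\chi_{8d}$ is $\overline{\epsilon(d)}$ (using $\tau(\overline\psi)=\overline{\tau(\psi)}$, as $\psi$ is even), one has $\widetilde\Lambda(s)=\epsilon(d)\overline{\epsilon(d)}\widetilde\Lambda(1-s)=\widetilde\Lambda(1-s)$. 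Running the same argument with
$$\mathcal{J}_\psi(d)=\frac{1}{2\pi i}\int_{(2)}\Big(\frac{\Gamma(\tfrac{s}{2}+\tfrac14)}{\Gamma(\tfrac14)}\Big)^2\Big(\frac{8dq}{\pi}\Big)^{s}L\big(\tfrac12+s,\psi\chi_{8d}\big)L\big(\tfrac12+s,\overline\psi\chi_{8d}\big)\,\frac{ds}{s}=\sum_n\frac{d_\psi(n)\chi_{8d}(n)}{n^{1/2}}\,\omega_2\!\Big(\frac{n\pi}{8dq}\Big),$$
moving the line to $\Re s=-2$ collects at $s=0$ the residue $|L(\tfrac12,\psi\chi_{8d})|^2$, while self-duality turns the shifted integral into $-\mathcal{J}_\psi(d)$; hence $2\mathcal{J}_\psi(d)=|L(\tfrac12,\psi\chi_{8d})|^2$, and factoring $n=mn'$ as before (now using that $d_\psi=\psi\ast\overline\psi$ is multiplicative and $\chi_{8d}(m)=\chi_{8h}(m)$ on $\mathcal{S}(r)$) gives (\ref{appeq6}).

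The substantive points are arithmetic rather than analytic: identifying the conductor $8dq$ and evaluating the Gauss sum to pin down the root number $\epsilon(d)$, checking that the root number of the product is $+1$, and verifying the invariances $\epsilon(d)=\epsilon(h)$ and $\chi_{8d}(m)=\chi_{8h}(m)$ for $m\in\mathcal{S}(r)$, which are what allow the ``$m$-part'' to be frozen at $h$. The contour manipulations are routine given the convexity-type bounds for $L(\tfrac12+s,\psi\chi_{8d})$ in vertical strips and the exponential decay of the gamma factors (which also justify the termwise integration), so I expect the only real care to be needed in the Gauss-sum bookkeeping.
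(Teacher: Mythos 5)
Your proposal is correct and follows essentially the same route as the paper: the identities you derive by shifting the contour from $\Re s=2$ to $\Re s=-2$ and reflecting with the functional equation are precisely the paper's starting formulas (\ref{int1}) and (\ref{int2}), and the subsequent steps (termwise integration against $\omega_j$, the Gauss-sum factorisation $\tau(\psi\chi_{8d})=\tau(\psi)\tau(\chi_{8d})\psi(8d)\chi_{8d}(q)$ giving $\epsilon(d)=\epsilon(h)$, and the splitting $n=mn'$ with $m\in\mathcal{S}(r)$ using $\chi_{8d}(m)=\chi_{8h}(m)$) coincide with the paper's. No gaps.
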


\begin{proof}For an even primitive Dirichlet character $\chi\pmod N$, the completed Dirichlet $L$-function 
\begin{eqnarray}\label{def2}
\xi(s,\chi)=\left(\frac{\pi}{N}\right)^{s/2}\Gamma\left(\frac{s}{2}\right)L(s,\chi)
\end{eqnarray}
is an entire function and satisfies the functional equation 
\begin{eqnarray} \label{func}
\xi(1-s,\overline{\chi})=\frac{N^{1/2}}{\tau(\chi)}\xi(s,\chi).
\end{eqnarray}
Due to the rapid decay of the $\Gamma$ function and (\ref{func}), for $c>0$ we have 
\begin{eqnarray} \label{int1}
\xi\left(\frac{1}{2},\chi\right)=\frac{1}{2\pi i}\int_{(c)}\left(\xi\left(s+\frac{1}{2},\chi\right)+\frac{\tau(\chi)}{N^{1/2}}\xi\left(s+\frac{1}{2},\overline{\chi}\right)\right)\frac{ds}{s}
\end{eqnarray}
and 
\begin{eqnarray} \label{int2}
\left|\xi\left(\frac{1}{2},\chi\right)\right|^2=\frac{1}{\pi i}\int_{(c)}\xi\left(s+\frac{1}{2},\chi\right)\xi\left(s+\frac{1}{2},\overline{\chi}\right)\frac{ds}{s}.
\end{eqnarray}
By  (\ref{func}), for $c>1/2$ it follows that (\ref{int1}) and (\ref{int2}) are
\begin{eqnarray} \label{eq1}
L\left(\frac{1}{2},\chi\right)=\sum_{n=1}^{\infty}\frac{1}{n^{1/2}}\left(\chi(n)+\frac{\tau(\chi)}{N^{1/2}}\overline\chi(n)\right)
\frac{1}{2\pi i}\int_{(c)}\frac{\Gamma\left(\frac{s}{2}+\frac{1}{4}\right)}{\Gamma\left(\frac{1}{4}\right)}\left(n\sqrt\frac{\pi}{N}\right)^{-s}\frac{ds}{s}\nonumber\\
\end{eqnarray}
and 
\begin{eqnarray} \label{eq2}
\left|L\left(\frac{1}{2},\chi\right)\right|^2=\sum_{n_1,n_2=1}^{\infty}\frac{\chi(n_1)\overline\chi(n_2)}{(n_1n_2)^{1/2}}\frac{1}{\pi i}\int_{(c)}\left(\frac{\Gamma\left(\frac{s}{2}+\frac{1}{4}\right)}{\Gamma\left(\frac{1}{4}\right)}\right)^2\left(\frac{n_1n_2\pi}{N}\right)^{-s}\frac{ds}{s}.\nonumber\\
\end{eqnarray}
Using Definition \ref{def1} in (\ref{eq1}) and (\ref{eq2}), we obtain
\begin{eqnarray} \label{appeq1}
L\left(\frac{1}{2},\chi\right)&=&\sum_{n=1}^{\infty}\frac{1}{n^{1/2}}\left(\chi(n)+\frac{\tau(\chi)}{N^{1/2}}\overline\chi(n)\right)
\omega_1\left(n\sqrt\frac{\pi}{N}\right)
\end{eqnarray}
and
\begin{eqnarray} \label{appeq2}
\left|L\left(\frac{1}{2},\chi\right)\right|^2=2\sum_{n=1}^{\infty}\frac{d_{\chi}(n)}{n^{1/2}}\omega_2\left(\frac{n_1n_2\pi}{N}\right).
\end{eqnarray}
Now it is elementary that $\tau(\chi_1\chi_2)=\tau(\chi_1)\tau(\chi_2)\chi_1(N_2)\chi_2(N_1)$ for primitive characters modulo $N_1$, $N_2$ if $(N_1,N_2)=1$. Thus, taking $N_1=q$, $N_2=8d$ and $\chi=\psi\chi_{8d}$ in (\ref{appeq1}) and (\ref{appeq2}), we get 
\begin{eqnarray} \label{appeq3}
L\left(\frac{1}{2},\psi\otimes\chi_{8d}\right)=\sum_{n=1}^{\infty}\frac{\chi_{8d}(n)}{n^{1/2}}\left(\psi(n)+\epsilon(h)\overline\psi(n)\right)
\omega_1\left(n\sqrt\frac{\pi}{8dq}\right)
\end{eqnarray}
and
\begin{eqnarray} \label{appeq4}
\left|L\left(\frac{1}{2},\psi\otimes\chi_{8d}\right)\right|^2=2\sum_{n=1}^{\infty}\frac{d_{\psi}(n)\chi_{8d}(n)}{n^{1/2}}\omega_2\left(\frac{n\pi}{8dq}\right).
\end{eqnarray}
Since $q|r$, for any $m\in\mathcal{S}(r)$ we have $\chi_{8d}(m)=\chi_{8h}(m)$. Thus (\ref{appeq5}) and (\ref{appeq6}) follow from (\ref{appeq3}) and (\ref{appeq4}).
\end{proof}

\section{Proof of Theorem \ref{th1}}\label{ev}
Following Soundarajan \cite{S2}, we evaluate the expected value
\begin{eqnarray}\label{mom3d}
\mathcal{E}=\frac{1}{X}\sum_{\substack{d\equiv h \pmod r}}\mu^2(d)\chi_{8d}\left(l\right)L(1/2,\psi\otimes\chi_{8d})\Phi\left(\frac{d}{X}\right)
\end{eqnarray}
by writing
\begin{eqnarray}
\mu^2(d)=M_Y(d)+R_Y(d)\nonumber
\end{eqnarray} 
where $M_Y(d)$ is defined in (\ref{MY}),
\begin{eqnarray}
R_Y(d)=\sum_{\substack{k^2|d\\k> Y}}\mu(k)
\end{eqnarray}
and clearly $R_Y(d)\ll d^{\epsilon}$. Then
\begin{eqnarray}\label{Rbound1}
&&\mathcal{E}-\frac{1}{X}\sum_{\substack{d\equiv h \pmod r}}M_Y(d)\chi_{8d}\left(l\right)L(1/2,\psi\otimes\chi_{8d})\Phi\left(\frac{d}{X}\right)\nonumber\\
&\leq&\frac{1}{X} \sum_d \left|R_Y(d)L(1/2,\psi\otimes\chi_{8d})\Phi\left(\frac{d}{X}\right)\right|\nonumber\\
&\leq&X^{\epsilon/2}\left(\frac{1}{X} \sum_d \left|R_Y(d)L^2(1/2,\psi\otimes\chi_{8d})\Phi\left(\frac{d}{X}\right)\right|\right)^{1/2}
\end{eqnarray}
by the Cauchy-Schwartz inequality, and Soundararajan (\cite{S1}, Lemma 3.2) has shown that the right hand side of (\ref{Rbound1}) is
\begin{eqnarray}
\ll \frac{X^{\epsilon}}{Y^{1/2}}.
\end{eqnarray}
Inserting the approximate functional equation for $L(1/2,\psi\otimes\chi_{8d})$ of Lemma \ref{dlem} on the left hand side of (\ref{Rbound1}), interchanging the order of summation we obtain
\begin{eqnarray}\label{Msum}
\mathcal{E}&=&\frac{1}{X}\sum_{m\in\mathcal{S}(r)}\sum_{(n,r)=1}\frac{\chi_{8h}(m)\chi_{8d}(n)}{(mn)^{1/2}}\left(\psi(m)\psi(n)
+\epsilon(h)\overline \psi(m)\overline \psi(n)\right)
\nonumber\\&\times&\mathcal{S}_{h,r,X,Y}\left(\left(\frac{8\cdot}{ln}\right),F_{1,mn}\right)+O\left(\frac{X^{\epsilon}}{Y^{1/2}}\right)
\end{eqnarray}
and applying the Poisson summation formula of Lemma \ref{pois} in (\ref{Msum}), we get 
\begin{eqnarray}\label{evdecomp}
\mathcal{E}=\mathcal{D}+\mathcal{N}+O\left(\frac{X^{\epsilon}}{Y^{1/2}}\right)
\end{eqnarray}
where
\begin{eqnarray}\label{evd}
\mathcal{D}&=&\frac{1}{r}\sum_{m\in\mathcal{S}(r)}\sum_{(n,r)=1}\frac{\chi_{8h}(m)}{(mn)^{1/2}}\frac{\widehat F_{1,mn}(0)G_0(ln)}{ln}\left(\frac{8r}{ln}\right)\nonumber\\
&\times&\left(\psi(m)\psi(n)
+\epsilon(h)\overline \psi(m)\overline \psi(n)\right)\sum_{\substack{\alpha\leq Y\\(\alpha,lnr)=1\\}}\frac{\mu(\alpha)}{\alpha^2}
\end{eqnarray}
denotes the diagonal term and 
\begin{eqnarray}\label{evndiag}
\mathcal{N}&=&\frac{1}{r}\sum_{m\in\mathcal{S}(r)}\sum_{(n,r)=1}\frac{\chi_{8h}(m)}{(mn)^{1/2}}\left(\frac{8r}{ln}\right)\left(\psi(m)\psi(n)
+\epsilon(h)\overline \psi(m)\overline \psi(n)\right)\nonumber\\
&\times&\sum_{\substack{\alpha\leq Y\\(\alpha,lnr)=1\\}}\frac{\mu(\alpha)}{\alpha^2}\sum_{k\neq 0}\frac{G_k(ln)}{ln} \Re\left((1+i)  \widehat F_{1,mn}\left(\frac{kX}{\alpha^2lnr}\right)e\left(\frac{\overline{\alpha^2ln}kh}{r}\right)\right)
\end{eqnarray}
denotes the non-diagonal terms. Clearly if $(l,r)>1$ then $\mathcal{D}=\mathcal{N}=0$, so we assume that $(l,r)=1$ in what follows.

\subsection{Bounds on the non-diagonal terms}\label{bn} 
Bounding everything trivially (using $G_k(s)\ll s$), the non-diagonal terms (\ref{evndiag}) are 
\begin{eqnarray}\label{ndiagboundo}
\mathcal{N}\ll\frac{1}{r}\sum_{m\in\mathcal{S}(r)}\sum_{(n,r)=1}\sum_{\substack{\alpha\leq Y\\}}\frac{1}{\alpha^2(mn)^{1/2}}\sum_{k\neq 0}\left|\widehat F_{1,mn}\left(\frac{kX}{\alpha^2lnr}\right)\right|.
\end{eqnarray}
By Lemma \ref{xibounds}, the terms with $n\gg X^{1/2+\epsilon}$ in (\ref{ndiagboundo}) are exponentially decreasing in $X$. For the remaining $n$, we have $|kX/\alpha^2lnr|\geq X^{\epsilon}$ provided that $lrY^2\leq X^{1/2-2\epsilon}$, so (\ref{ndiagboundo}) is
\begin{eqnarray}\label{boundq}
&\ll&\frac{r^{\nu-1}}{X^{\nu}}\sum_{m\leq r}\sum_{n\leq X^{1/2+\epsilon}}\sum_{\substack{\alpha\leq Y\\}}\sum_{k\neq 0}\frac{1}{\alpha^{2-2\nu} k^{\nu}m^{1/2}n^{1/2-\nu}}\nonumber\\
&\ll&\frac{l^{\nu}r^{\nu-1/2}Y^{2\nu-1}}{X^{\nu/2-1/4-\epsilon}}\nonumber
\end{eqnarray}
for any $\nu>1$. Taking $\nu=3/2$, we get
\begin{eqnarray}\label{ndiagbound}
\mathcal{N} \ll \frac{l^{3/2}r Y^2}{X^{1/2-\epsilon}}.
\end{eqnarray}

\subsection{The diagonal term}\label{diagevm}
Since $G_0(s)=0$ if $s\neq \square$ and $G_0(s)=\phi(s)$ if $s=\square$,  we put $l=l_1l_2^2$ where $l_1$ is squarefree so that $ln=\square$ is equivalent to having $n\rightarrow l_1n^2$. Thus, the diagonal term (\ref{evd}) is 
\begin{eqnarray}\label{diagev2}
\mathcal{D}&=&\frac{1}{l_1^{1/2}r}\sum_{m\in\mathcal{S}(r)}\frac{\chi_{8h}(m)}{m^{1/2}}\sum_{(l_1n,r)=1}\frac{\hat F_{1,l_1mn^2}(0)\prod_{p|ln}(1-p^{-1})}{n}\nonumber\\
&\times&\left(\psi(l_1) \psi(m) \psi^2(n)+\epsilon(h)\overline\psi(l_1) \overline \psi(m)  \overline  \psi^2(n)  \right)
\sum_{\substack{\alpha\leq Y\\(\alpha,lnr)=1\\}}\frac{\mu(\alpha)}{\alpha^2}.
\end{eqnarray}
Since $\hat F_{1,l_1mn^2}(0)$ is exponentially decreasing in $X$ when $n\gg X^{1/4+\epsilon}$, completing the sum over $\alpha$ in (\ref{diagev2}) introduces an error of
\begin{eqnarray}\label{diagev3}
&\ll&\frac{1}{l_1^{1/2}rY}\sum_{m\in\mathcal{S}(r)}\frac{1}{m^{1/2}}\sum_{n\leq X^{1/4+\epsilon}}\frac{1}{n}\nonumber\\
&\ll&\frac{\log X}{(l_1r)^{1/2}Y}.
\end{eqnarray}
Since also
\begin{eqnarray}
\prod_{p|ln}(1-p^{-1})\sum_{(\alpha,lnr)=1}\frac{\mu(\alpha)}{\alpha^2}&=&\prod_{p|ln}(1-p^{-1})\prod_{p\nmid lnr}\left(1-p^{-2}\right)
\nonumber\\&=&\frac{1}{L(2,\varphi_0)\prod_{p|ln}\left(1+p^{-1}\right)}\nonumber
\end{eqnarray}
where $\varphi_0$ is the principal character to the modulus $r$, the diagonal term (\ref{evd}) is 
\begin{eqnarray}\label{gcd}
\mathcal{D}=D_{h,l,r,X,\psi}+\epsilon(h)\overline{D_{h,l,r,X,\psi}}+O\left(\frac{\log X}{(l_1r)^{1/2}Y}\right)
\end{eqnarray}
where 
\begin{eqnarray}\label{an0}
D_{h,l,r,X,\psi}=\frac{\varphi_0(l_1)\psi(l_1)}{L(2,\varphi_0)l_1^{1/2}r}    \sum_{m\in\mathcal{S}(r)}\left(\frac{8h}{m}\right)\frac{\psi(m)}{m^{1/2}}
\sum_{1}^{\infty}\frac{\varphi_0(n)\psi^2(n)\widehat F_{1,l_1mn^2}(0)}{n\prod_{p| ln}\left(1+p^{-1}\right)}.\nonumber\\
\end{eqnarray}\\

To evaluate (\ref{an0}), for Dirichlet characters $\chi$ we define the following Euler products
\begin{eqnarray}\label{adef}
A_{\nu}(s,\chi)=\prod_{p|\nu}\left(1-\frac{\chi(p)}{p^s}\right),\nonumber
\end{eqnarray}
\begin{eqnarray}
B_{\nu}(s,\chi)=\prod_{p\nmid \nu}\left(1-\frac{\chi(p)}{p^{s}(p+1)}\right),\nonumber
\end{eqnarray}
\begin{eqnarray}
C_{\mu,\nu}(s,\chi)=\prod_{\substack{p\nmid \mu\\p|\nu}}\left(1+\frac{1}{p}\right)\left(1-\frac{\chi(p)}{p^{s}(p+1)}\right)\nonumber
\end{eqnarray}
and 
 \begin{eqnarray}\label{phimel}
\check\Phi\left(s\right)=\int_{0}^{\infty}\Phi(y)y^sdy\hspace{1.5cm}(s\in\mathbb{C}).
\end{eqnarray}
Using Definition \ref{def1} to write $\widehat F_{1,l_1mn^2}(0)$ as a double integral and using (\ref{phimel}), we get
\begin{eqnarray}\label{ant1}
D_{h,l,r,X,\psi}=\frac{\varphi_0(l_1)\psi(l_1)}{L(2,\varphi_0)l_1^{1/2}r} \frac{1}{2\pi i}    \int_{(c>0)}\left(\sum_{1}^{\infty}\frac{\varphi_0(n)\psi^2(n)}{n^{2s+1}\prod_{p| ln}\left(1+p^{-1}\right)}\right)\frac{\Gamma\left(\frac{s}{2}+\frac{1}{4}\right)\check\Phi\left(\frac{s}{2}\right)\left(\frac{8qX}{\pi}\right)^{s/2}ds}{A_{r/q}\left(s+\frac{1}{2},\chi_{8h}\psi\right)\Gamma\left(\frac{1}{4}\right)l_1^{s}s},
\nonumber\\
\end{eqnarray}
where 
\begin{eqnarray}
\sum_{1}^{\infty}\frac{\varphi_0(n)\psi^2(n)}{n^{2s+1}\prod_{p| ln}\left(1+p^{-1}\right)}=\frac{A_{r}(2s+1,\psi^2)B_r(2s+1,\psi^2)L(2s+1,\psi^2)}{C_{r,l}(2s+1,\psi^2)}.\nonumber
\end{eqnarray}
Since $B(s,\chi)$ is bounded for fixed $\sigma>0$ and $L(s,\chi)$ is polynomially bounded for $\sigma>0$ except for a pole at $s=1$ when $\chi$ is principal,
the rapid decay of the $\Gamma$ function and $\tilde\Phi$ permit us to move the path of integration in \eqref{ant1} to the line $\sigma=\epsilon-1/2$ ($0<\epsilon<1/2$) so that the integral in \eqref{ant1} is equal to 

\begin{eqnarray}\label{res}
&&\textrm{Res}\left(\frac{A_{r}(2s+1,\psi^2)B_r(2s+1,\psi^2)L(2s+1,\psi^2)\Gamma\left(\frac{s}{2}+\frac{1}{4}\right)\check\Phi\left(\frac{s}{2}\right)\left(\frac{8qX}{\pi}\right)^{s/2}}{A_{r/q}\left(s+\frac{1}{2},\chi_{8h}\psi\right)C_{r,l}(2s+1,\psi^2)\Gamma\left(\frac{1}{4}\right)l_1^{s}s},s=0\right)\nonumber\\
&+&\frac{1}{2\pi i}\int_{(\epsilon-1/2)}
\frac{A_{r}(2s+1,\psi^2)B_r(2s+1,\psi^2)L(2s+1,\psi^2)\Gamma\left(\frac{s}{2}+\frac{1}{4}\right)\check\Phi\left(\frac{s}{2}\right)\left(\frac{8qX}{\pi}\right)^{s/2}ds}
{A_{r/q}\left(s+\frac{1}{2},\chi_{8h}\psi\right)C_{r,l}(2s+1,\psi^2)\Gamma\left(\frac{1}{4}\right)l_1^{s}s}.
  \nonumber\\
\end{eqnarray}
Since clearly $A_{\nu}(\epsilon,\chi)$, $C^{-1}_{\mu,\nu}(\epsilon,\chi)$ and $A^{-1}_{\nu}(\epsilon,\chi)$  are all $\ll_{\epsilon}\nu^{\epsilon}$, the integral in (\ref{res}) is 
\begin{eqnarray}\label{intbound}
\ll_{\epsilon}l^{\epsilon}l_1^{1/2-\epsilon}r^{\epsilon-1}X^{\epsilon-1/4}
\end{eqnarray}
and, combining the error terms (\ref{evdecomp}), (\ref{ndiagbound}), (\ref{diagev3}) and (\ref{intbound}), we obtain the error term (\ref{th1errorbound}) of Theorem \ref{th1}. To get the main terms, we evaluate the residue in (\ref{res}). If $\psi$ is non-quadratic then $L(2s+1,\psi^2)$ is analytic at $s=0$ so the pole is simple and the residue is
\begin{eqnarray}\label{simpole}
\frac{\widehat\Phi\left(0\right)A_{r}(1,\psi^2)B_r(1,\psi^2)L(1,\psi^2)}{A_{r/q}\left(\frac{1}{2},\chi_{8h}\psi\right)C_{r,l}(1,\psi^2)},
\end{eqnarray}
which proves (\ref{mom3}). On the other hand, if $\psi=1$ is the trivial character, then $L(s,\psi^2)=\zeta(s)$ and 
\begin{eqnarray}
\frac{\zeta(2s+1)}{s}=\frac{1}{2s^2}+\frac{\gamma}{s}+O(1)\nonumber
\end{eqnarray}
as $s\rightarrow\infty$, so the residue is 
\begin{eqnarray}\label{dipole}
&=&\frac{\widehat\Phi\left(0\right)A_{r}(1,1)B_r(1,1)L(1,1)}{A_{r/q}\left(\frac{1}{2},\chi_{8h}\right)C_{r,l}(1,1)}\left(\frac{\log X}{4}+\gamma\right)
\nonumber\\
&+&\frac{1}{2}\frac{d}{ds}
\frac{A_{r}(2s+1,1)B_r(2s+1,1)\Gamma\left(\frac{s}{2}+\frac{1}{4}\right)\check\Phi\left(\frac{s}{2}\right)\left(\frac{8}{\pi}\right)^{s/2}}
{A_{r/q}\left(s+\frac{1}{2},\chi_{8h}\psi\right)C_{r,l}(2s+1,1)\Gamma\left(\frac{1}{4}\right)l_1^{s}}\biggr\vert _{s=0},\nonumber
\end{eqnarray}
which completes our proof of Theorem \ref{th1}.

\section{Proof of Theorem \ref{th2}}\label{sm}
To handle the second moments
\begin{eqnarray}\label{mom4d}
\mathcal{M}_2=\frac{1}{X}\sum_{\substack{d\equiv h \pmod r}}\mu^2(d)\chi_{8d}\left(l\right)\left|L(1/2,\psi\otimes\chi_{8d})\right|^2\Phi\left(\frac{d}{X}\right),
\end{eqnarray} 
we begin as we did in the proof of Theorem \ref{th1} in Section \ref{ev}. Here the analogue of (\ref{Rbound1}) is
\begin{eqnarray}\label{Rbound2}
\mathcal{M}_2-\frac{1}{X}\sum_{\substack{d\equiv h \pmod r}}M_Y(d)\chi_{8d}\left(l\right)\left|L(1/2,\psi\otimes\chi_{8d})\right|^2\Phi\left(\frac{d}{X}\right)\ll \frac{X^{\epsilon}}{Y}.
\end{eqnarray}\\
Using the approximate functional equation for $|L(1/2,\psi\otimes\chi_{8d})|^2$ from Lemma \ref{dlem} in (\ref{Rbound2}) and interchanging the order of summation, we have  
\begin{eqnarray}\label{Msum2}
\mathcal{M}_2=\frac{2}{X}\sum_{m\in\mathcal{S}(r)}\frac{\chi_{8h}(m)d_\psi(m)}{m^{1/2}}\sum_{(n,r)=1}\frac{d_\psi(n)}{n^{1/2}}\mathcal{S}_{h,r,X,Y}\left(\left(\frac{8\cdot}{ln}\right),F_{2,mn}\right)+O\left(\frac{X^{\epsilon}}{Y}\right).\nonumber\\
\end{eqnarray}
Applying the Poisson summation formula (Lemma \ref{poisson}) in (\ref{Msum2}), we get
\begin{eqnarray}\label{decomp}
\mathcal{M}_2=\mathcal{P}_1+\mathcal{R}_0+O\left(\frac{X^{\epsilon}}{Y}\right)
\end{eqnarray}
where
\begin{eqnarray}\label{both2}
\mathcal{P}_1=\frac{2}{rl}\sum_{m\in\mathcal{S}(r)}\frac{\chi_{8h}(m)d_\psi(m)}{m^{1/2}}\sum_{(n,r)=1}\frac{d_\psi(n)}{n^{1/2}} \frac{\hat F_{2,mn}(0)G_0(ln)}{n}\left(\frac{8r}{ln}\right)\sum_{\substack{\alpha\leq Y\\(\alpha,lnr)=1\\}}\frac{\mu(\alpha)}{\alpha^2}\nonumber\\
\end{eqnarray}
denotes the diagonal term, and 
\begin{eqnarray}\label{secmndiag}
\mathcal{R}_0&=&\frac{2}{lr}\sum_{m\in\mathcal{S}(r)}\frac{\chi_{8h}(m)d_\psi(m)}{m^{1/2}}\sum_{(n,r)=1}\frac{d_\psi(n)}{n^{1/2}}\sum_{\substack{\alpha\leq Y\\(\alpha,lnr)=1\\}}\frac{\mu(\alpha)}{\alpha^2}\left(\frac{r/2}{ln}\right)\nonumber\\
&\times& \sum_{k\neq 0}\frac{G_{4k}(ln)}{n} \Re\left((1+i)  \widehat F_{2,mn}\left(\frac{kX}{\alpha^2lnr}\right)e\left(\frac{\overline{\alpha^2ln}kh}{r}\right)\right)
\end{eqnarray}
denotes the non-diagonal terms. Clearly if $(l,r)>1$ then $\mathcal{P}_1=\mathcal{R}_0=0$, so we assume that $(l,r)=1$ in what follows.  We evaluate $\mathcal{P}_1$ in  Section \ref{diagsm} and $\mathcal{R}_0$ in Section \ref{dc}.

\subsection{The non-diagonal terms}\label{dc}
The objective of this section is to show that $\mathcal{R}_0=\mathcal{P}_2+\mathcal{R}$ where $\mathcal{P}_2$ is a non-diagonal main term and $\mathcal{R}$ is an error term. We use the notation 
$\textrm{cas}(x)=\cos( x)+\sin( x)$ and follow Soundararajan \cite{S2} in defining
\begin{eqnarray}
\widetilde F(x)=\Re \widehat F (x)-\Im \widehat F (x)=\int_{-\infty}^{\infty}F(y)\textrm{cas}(2\pi xy)dy
\end{eqnarray}
for real $F$. As such,
\begin{eqnarray}\label{cas}
\Re\left((1+i) \widehat F\left(x\right)e\left(z\right) \right)=\cos(2\pi z)\widetilde F(x)+\sin(2\pi z)\widetilde F(-x).
\end{eqnarray}
Assuming that $lrY^2\ll X^{1/2-\delta}$ for some fixed $\delta>0$ and interchanging the order of the summations over $k$ and $n$ in (\ref{secmndiag}) (this is justified by Lemma \ref{xibounds}), we are lead to consider sums of the form
\begin{eqnarray}\label{nsum}
&&\sum_{\substack{(n,\alpha r)=1}}\left(\frac{r/2}{ln}\right)\frac{d_{\psi}(n)}{n}\frac{G_{4k}(ln)}{n^{1/2}}\nonumber\\
&\times&\left(\cos\left(\frac{2\pi kh\overline{\alpha^2ln}}{r}\right)\widetilde F_{2,mn}\left(\frac{kX}{\alpha^2lnr}\right)+\sin\left(\frac{2\pi kh\overline{\alpha^2ln}}{r}\right)\widetilde F_{2,mn}\left(-\frac{kX}{\alpha^2lnr}\right)\right).\nonumber\\
\end{eqnarray} 
In particular, the terms with $n\gg X^{1+\epsilon}$ in (\ref{nsum}) are exponentially decreasing in $X$ so, by Mellin inversion, the sum (\ref{nsum}) may be written as the integral  
\begin{eqnarray}\label{melinv}
\frac{1}{2\pi i}\int_{(c>1/2)} \left(    \mathcal{C}_{\psi}(s+1)    f\left(\frac{kmX}{\alpha^2lr},s\right)+\mathcal{S}_{\psi}(s+1)  f\left(-\frac{kmX}{\alpha^2lr},s\right)      \right)      \frac{ds}{m^s}\nonumber\\
\end{eqnarray}
where
\begin{eqnarray}\label{dir}
\mathcal{C}_{\psi}(s)=\sum_{\substack{(n,\alpha r)=1}}\left(\frac{r/2}{ln}\right)\frac{d_{\psi}(n)}{n^{s}}\frac{G_{4k}(ln)}{n^{1/2}}\cos\left(\frac{2\pi kh\overline{\alpha^2ln}}{r}\right)
\end{eqnarray}
and 
\begin{eqnarray}\label{dirsin}
\mathcal{S}_{\psi}(s)=\sum_{\substack{(n,\alpha r)=1}}\left(\frac{r/2}{ln}\right)\frac{d_{\psi}(n)}{n^{s}}\frac{G_{4k}(ln)}{n^{1/2}}\sin\left(\frac{2\pi kh\overline{\alpha^2ln}}{r}\right)
\end{eqnarray}
converge absolutely for $\sigma>3/2$ and
\begin{eqnarray}
 f\left(\xi,s\right)=\int_{0}^{\infty} \widetilde F_{2,t}\left(\frac{\xi}{t}\right)t^{s-1}dt
\end{eqnarray}
converges absolutely for $\sigma>0$.

\subsubsection{The non-diagonal main term}\label{exp} To understand the functions defined by the Dirichlet series (\ref{dir}) and (\ref{dirsin}), we expand the $r$-periodic trigonometric functions as linear combinations of Dirichlet characters (mod $r$), e.g. 
\begin{eqnarray}
\cos\left(\frac{2\pi kh\overline{x}}{r}\right)=\frac{1}{\phi(r)}\sum_{\varphi\pmod r}\left\langle \cos\left(\frac{2\pi k\cdot}{r}\right) ,\overline\varphi  \right\rangle \varphi\left(\overline{h}x\right)
\end{eqnarray}
where 
\begin{eqnarray}
\langle  f,g\rangle=\sum_{a\pmod r}f(a) \overline{g(a)}
\end{eqnarray}
 is the inner product on the group $\mathbb{Z}/r\mathbb{Z}$, so that (\ref{dir}) is equal to 
\begin{eqnarray}\label{expansion}
\mathcal{C}_{\psi}(s)=\frac{1}{\phi(r)}\sum_{\varphi\pmod r}\left\langle \cos\left(\frac{2\pi k\cdot}{r}\right) ,\overline\varphi  \right\rangle \varphi\left(\overline h\alpha^2l\right)\mathcal{D}_{\psi,\varphi}(s;k,l,r,\alpha)\nonumber\\
\end{eqnarray}
where  $\mathcal{D}_{\psi,\varphi}(s;k,l,r,\alpha)$ is the Dirichlet series/Euler product
\begin{eqnarray}\label{euler}
\mathcal{D}_{\psi,\varphi}(s;k,l,r,\alpha)&=&\sum_{\substack{(n,\alpha r)=1}}\left(\frac{r/2}{n}\right)\frac{d_{\psi}(n)\varphi(n)}{n^{s}}\frac{G_{4k}(ln)}{n^{1/2}}\nonumber\\
&=&\prod_{p\nmid \alpha r}\sum_{\beta=0}^{\infty}\left(\frac{r/2}{p^{\beta}}\right)\frac{d_{\psi}(p^{\beta})\varphi(p^{\beta})}{p^{\beta s}}\frac{G_{4k}(p^{\beta+\delta})}{p^{\beta/2}}
\end{eqnarray}
in which $l=\prod p^{\delta}$. Now let $4k=k_1k^2_2$ with $k_1$ a fundamental discriminant. A typical prime $p\nmid \alpha r$ neither divides $k$ or $l$ so, by Lemma \ref{gausssum}, a typical Euler factor in (\ref{euler}) is 
\begin{eqnarray}\label{factors}
&=&1+\left(\frac{r/2}{p}\right)\frac{d_{\psi}(p)\varphi(p)}{p^s}\left(\frac{4k}{p}\right)\nonumber\\ &=& 1+\left(\frac{k_1r/2}{p}\right)\frac{d_{\psi}(p)\varphi(p)}{p^s}\nonumber\\&=&\left(1-\left(\frac{k_1r/2}{p}\right)\frac{\psi(p)\varphi(p)}{p^s}\right)^{-1}\left(1-\left(\frac{k_1r/2}{p}\right)\frac{\overline\psi(p)\varphi(p)}{p^s}\right)^{-1}+O(p^{-2\sigma}).\nonumber
\end{eqnarray}
As such, we write 
\begin{eqnarray}\label{maineuler}
\mathcal{G}_{\psi,\varphi}(s;k,l,r,\alpha)=\prod_{p} \mathcal{G}_{\psi,\varphi,p}(s;k)
\end{eqnarray}
where
\begin{eqnarray}\label{holo}
    \mathcal{G}_{\psi,\varphi,p}(s;k)=
    \begin{cases}
      \left(1-\left(\frac{k_1r/2}{p}\right)\frac{\psi(p)\varphi(p)}{p^s}\right)\left(1-\left(\frac{k_1r/2}{p}\right)\frac{\overline\psi(p)\varphi(p)}{p^s}\right) & \text{}p|\alpha r\\
      \left(1-\left(\frac{k_1r/2}{p}\right)\frac{\psi(p)\varphi(p)}{p^s}\right)\left(1-\left(\frac{k_1r/2}{p}\right)\frac{\overline\psi(p)\varphi(p)}{p^s}\right)\\
      \times\sum_{\beta=0}^{\infty}\left(\frac{r/2}{p^{\beta}}\right)\frac{d_{\psi}(p^{\beta})\varphi(p^{\beta})}{p^{\beta s}}\frac{G_{4k}(p^{\beta+\delta})}{p^{\beta/2}} & \text{}p\nmid \alpha r,
    \end{cases}\nonumber\\
\end{eqnarray}
so that 
\begin{eqnarray}\label{denom}
\mathcal{G}_{\psi,\varphi}(s;k,l,r,\alpha)=\frac{\mathcal{D}_{\psi,\varphi}(s;k,l,r,\alpha)}{L\left(s,   \psi\varphi\left(\frac{k_1r/2}{\cdot}   \right)\right)L\left(s,\overline\psi\varphi\left(\frac{k_1r/2}{\cdot}   \right)\right)}
\end{eqnarray}
is holomorphic in the half plane $\sigma>1/2$ and bounded in $\sigma>1/2+\epsilon$ for any fixed $k$, $l$, $\alpha$ and $\epsilon>0$. Specifically, we have the bound
\begin{eqnarray}\label{Gbound}
\left|\mathcal{G}_{\psi,\varphi}(s;k,l,r,\alpha)\right|\ll \alpha^{\epsilon}|k|^{\epsilon}l^{1/2+\epsilon}(l,k_2^2)^{1/2}
\end{eqnarray}
due to Soundararajan (\cite{S2}, Lemma 5.3). \\

The locations of the poles and the growth on vertical lines of $\mathcal{C}_{\psi}(s)$ and $\mathcal{S}_{\psi}(s)$ are determined by the Dirichlet $L$-functions in the denominator of (\ref{denom}). In particular, since $\psi$ is a non-quadratic character, at most one of the characters 
\begin{eqnarray}
\psi(\cdot)\varphi(\cdot)\left(\frac{k_1r/2}{\cdot}\right)\hspace{0.5cm}\textrm{or}\hspace{0.5cm}\overline\psi(\cdot)\varphi(\cdot)\left(\frac{k_1r/2}{\cdot}\right)\nonumber
\end{eqnarray}
can be principal, so $\mathcal{D}_{\psi,\varphi}(s;k,l,\alpha)$ is meromorphic 
 in the half plane $\sigma>1/2$ with at most one pole of order $1$ at $s=1$. Assuming that $r$ is even and squarefree, we shall now show that such poles exist if and only if $k_1=1$, i.e that the character
 \begin{eqnarray}\label{character}
\psi (\cdot)\left(\frac{k_1r/2}{\cdot}\right)\hspace{2cm}
\end{eqnarray}
has modulus $r$  if and only if $k_1=1$. Since $\psi$ is a character modulo $q|r$, a solution exists when 
 \begin{eqnarray}\label{firstfind}
\varphi_0(\cdot)\left(\frac{k_1r/2}{\cdot}\right)
\end{eqnarray}
is a character modulo $r$. This is the case when $k_1=1$ because then the Kronecker symbol is a primitive character modulo $r/2$ since $r$ is squarefree. On the other hand, for any integer $k_1\neq 1$ we have
\begin{eqnarray}\label{char1}
\left(    \frac{k_1r/2}{\cdot}\right)=\left(    \frac{(k_1,r/2)}{\cdot}\right)^2\left(    \frac{\frac{k_1r/2}{(k_1,r/2)^2}}{\cdot}\right)
\end{eqnarray}
so, if (\ref{firstfind}) is to be a character modulo $r$, we certainly require that $(k_1r/2)/(k_1,r/2)^2$ divides $r$. In other words, we require that $k_1=(k_1,r/2)^2$ or $k_1=2(k_1,r/2)^2$, but neither of these are fundamental discriminants. This proves that the character in (\ref{character}) is a character modulo $r$ if and only if $k=\square$, 
and so $\mathcal{D}_{\psi,\varphi}(s;k^2,l,r,\alpha)$ has a simple pole at $s=1$ precisely when $\varphi=\varphi_0\overline\psi\chi_{r/2}$ and $\varphi=\varphi_0\psi\chi_{r/2}$, thus
\begin{eqnarray}\label{cossimp0}
\mathcal{D}_{\psi,\varphi_0\overline\psi\chi_{r/2}}(s;k^2,l,r,\alpha)&=&\mathcal{G}_{\psi,\varphi_0\overline\psi\chi_{r/2}}(s;k^2,l,r,\alpha)L(s, \varphi_0\overline\psi^2)L(s, \varphi_0)\nonumber\\
&=&\mathcal{G}_{\psi,\varphi_0\overline\psi\chi_{r/2}}(s;k^2,l,r,\alpha)L\left(s, \varphi_0\overline\psi^2\right)\zeta(s)\prod_{p|r}\left(1-\frac{1}{p^s}\right),\nonumber\\
\end{eqnarray}
and similarly with $\psi$ replaced with $\overline\psi$. Also, from the definitions (\ref{dddef}) and (\ref{euler}), we observe that $\mathcal{D}_{\psi,\varphi_0\overline\psi\chi_{r/2}}(s;k^2,l,r,\alpha)$ and $\mathcal{G}_{\psi,\varphi_0\overline\psi\chi_{r/2}}(s;k^2,l,r,\alpha)$ depend only on $\psi^2$ since
\begin{eqnarray}\label{note1}
\psi(n)d_{\psi}(n)=\sum_{d|n}\overline\psi^2(d),
\end{eqnarray}
so from now on we suppress the double subscript notation and define
\begin{eqnarray}\label{note2}
\mathcal{G}_{\psi^2}(s;k^2,l,r,\alpha)=\mathcal{G}_{\overline\psi,\varphi_0\psi\chi_{r/2}}(s;k^2,l,r,\alpha). 
\end{eqnarray}
Thus, in this notation, the equality (\ref{cossimp0}) is
\begin{eqnarray}\label{cossimp}
\mathcal{D}_{\psi^2}(s;k^2,l,r,\alpha)=\mathcal{G}_{\psi^2}(s;k^2,l,r,\alpha)L\left(s, \varphi_0\psi^2\right)\zeta(s)\prod_{p|r}\left(1-\frac{1}{p^s}\right).
\end{eqnarray}\\

Moving the path of integration to the line $\epsilon-1/2$ for any fixed $0<\epsilon<1/2$ (the bounds that we obtain in Section \ref{Rev} justify this), by Cauchy's theorem the integral (\ref{melinv}) is equal to
\begin{eqnarray}\label{melinvtwo}
&&\textrm{Res}\left(    \mathcal{C}_{\psi}(s+1)    f\left(\frac{kmX}{\alpha^2lr},s\right)+\mathcal{S}_{\psi}(s+1)  f\left(-\frac{kmX}{\alpha^2lr},s\right):s=0      \right) \nonumber\\
&+& \frac{1}{2\pi i}\int_{(\epsilon-1/2)}\left(    \mathcal{C}_{\psi}(s+1)    f\left(\frac{kmX}{\alpha^2lr},s\right)+\mathcal{S}_{\psi}(s+1)  f\left(-\frac{kmX}{\alpha^2lr},s\right)      \right)      \frac{ds}{m^s}.\nonumber\\
\end{eqnarray}
The preceding analysis shows that the residue in (\ref{melinvtwo}) is non-zero precisely when $k=\square$. Making the change of variables $k\mapsto k^2$ and using (\ref{cossimp}), the residue is
\begin{eqnarray}\label{nondiag2}
&&\frac{2}{r} \Re\Biggr(    \left\langle \cos\left(\frac{2\pi k^2\cdot}{r}\right)f\left(\frac{k^2mX}{\alpha^2lr},0\right)+\sin\left(\frac{2\pi k^2\cdot}{r}\right) f\left(-\frac{k^2mX}{\alpha^2lr},0\right) ,\varphi_0\overline\psi\chi_{r/2}\right\rangle    \nonumber\\
&\times& \varphi_0\psi\chi_{r/2}\left(\overline h\alpha^2l\right)\mathcal{G}_{\overline\psi^2}(1;k^2,l,r,\alpha)L\left(1, \varphi_0\overline\psi^2\right)\Biggr)\nonumber\\
\end{eqnarray}
and we note that (\ref{nondiag2}) is independent of $X$ because, using the definition (\ref{Fdef}), we have
\begin{eqnarray}\label{inter}
f\left(\frac{k^2mX}{\alpha^2lr},0\right)&=&\int_{0}^{\infty}\int_{1}^{2}\Phi(y)\omega_2\left(\frac{t}{8qXy}\right)\textrm{cas}\left(\frac{2\pi k^2mXy}{\alpha^2 lr t}\right)dy         \frac{dt}{t}\nonumber\\
&=&\widehat\Phi(0)\int_{0}^{\infty}\omega_2\left(\frac{1}{t}\right)\textrm{cas}\left(\frac{2\pi k^2mt}{8\alpha^2 lqr}\right)      \frac{dt}{t}
\end{eqnarray}
by changing variables $t\mapsto 8qXy/t$, and similarly with the first argument of $f$ negated. We now write 
\begin{eqnarray}\label{omegadef2}
\Omega(y)=\frac{1}{y}\omega_2\left(\frac{1}{y}\right)
\end{eqnarray}
so that 
\begin{eqnarray}\label{indx}
f\left(\frac{k^2mX}{\alpha^2lr},0\right)=\widehat\Phi(0)\widetilde \Omega\left(\frac{k^2m}{8\alpha^2lqr}\right)\nonumber
\end{eqnarray}
and the residue in (\ref{nondiag2}) is
\begin{eqnarray}\label{nondiag7}
&&\frac{2\widehat\Phi(0)}{r}  \Re\Biggr(   \left\langle \cos\left(\frac{2\pi k^2\cdot}{r}\right)\widetilde \Omega\left(\frac{k^2mX}{\alpha^2lr}\right)+\sin\left(\frac{2\pi k^2\cdot}{r}\right) \widetilde \Omega\left(-\frac{k^2mX}{\alpha^2lr}\right) ,\varphi_0\overline\psi\chi_{r/2}\right\rangle    \nonumber\\
&\times& \varphi_0\psi\chi_{r/2}\left(\overline h\alpha^2l\right)\mathcal{G}_{\overline\psi^2}(1;k^2,l,r,\alpha)L\left(1, \varphi_0\overline\psi^2\right)\Biggr).\nonumber\\
\end{eqnarray}

From (\ref{secmndiag}) and (\ref{nondiag7}),  we conclude that $\mathcal{R}_0=\mathcal{P}_2+\mathcal{R}$ where
\begin{eqnarray}\label{nondiag3}
\mathcal{P}_2&=&\frac{4\widehat\Phi(0)\varphi_0(l)}{lr^2}\sum_{m\in\mathcal{S}(r)}\frac{\chi_{8h}(m)d_\psi(m)}{m^{1/2}}\sum_{\substack{\alpha\leq Y\\(\alpha,lr)=1\\}}\frac{\mu(\alpha)}{\alpha^2}\sum_{k\neq 0}\nonumber\\
&\times&\Re\Biggr(
 \left\langle \cos\left(\frac{2\pi k^2\cdot}{r}\right)\widetilde\Omega\left(\frac{k^2m}{8\alpha^2lqr}\right)+\sin\left(\frac{2\pi k^2\cdot}{r}\right) \widetilde\Omega\left(-\frac{k^2m}{8\alpha^2lqr}\right),\varphi_0\overline\psi\chi_{r/2}\right\rangle         \nonumber\\
&\times&\psi\chi_{r/2}\left(\overline h\alpha^2l\right)\mathcal{G}_{\overline\psi^2}(1;k^2,l,r,\alpha)L\left(1, \varphi_0\overline\psi^2 \right)\Biggr)\nonumber\\
\end{eqnarray}
is the non-diagonal main term and, for $0<\epsilon<1/2$,
\begin{eqnarray}\label{Rdef}
\mathcal{R}&=&2\sum_{k\neq 0}\sum_{m\in\mathcal{S}(r)} \sum_{\substack{\alpha\leq Y\\(\alpha,l)=1}}\sum_{\varphi\pmod r} \frac{\chi_{8h}(m)d_\psi(m)\mu(\alpha)\varphi(\alpha^2l)}{m^{1/2}  \alpha^2 lr\phi(r) \pi i}\int_{(\epsilon-1/2)}   \mathcal{D}_{\psi, \varphi}(w+1;k,l,r,\alpha)\nonumber\\
&\times &\left(  \left\langle \cos\left(\frac{2\pi kh\cdot}{r}\right) ,\overline\varphi  \right\rangle  f\left(\frac{kmX}{\alpha^2 lr},w\right) 
+\left\langle \sin\left(\frac{2\pi kh\cdot}{r}\right) ,\overline\varphi  \right\rangle f\left(-\frac{kmX}{\alpha^2 lr},w\right)       \right)    \frac{ dw}{m^{w}}\nonumber\\
\end{eqnarray}
are the remaining non-diagonal terms, which we bound in Section \ref{Rev} below. \\

We now give an integral representation of $\mathcal{P}_2$.

\begin{lemma}For any $0<\epsilon <1/2$ there is a constant $\mathcal{N}_{h,l,r,\psi,\Phi}$ such that
\begin{eqnarray}\label{Rbound4}
\mathcal{P}_2=\mathcal{N}_{h,l,r,\psi,\Phi}+O_{\epsilon,\Phi}\left(l^{\epsilon}r^{\epsilon-2}Y^{\epsilon-1}\right)\nonumber
\end{eqnarray}
given by
\begin{eqnarray}\label{almost}
\mathcal{N}_{h,l,r,\psi,\Phi}&=&\frac{4\widehat\Phi(0)\varphi_0(l)}{lr^2}\Re\Biggr(     \frac{L\left(1, \varphi_0\overline\psi^2 \right) \tau\left(\overline\psi\chi_{r/2}\right)\psi\chi_{r/2}\left(\overline {2h}l\right)}{2\pi i}
\nonumber\\
&\times&\int_{(\epsilon)}\frac{\Gamma^2\left(\frac{s}{2}+\frac{1}{4}\right)\Gamma_1(s)\mathcal{K}_{\overline\psi^2}(s;l,r)}{\Gamma^2\left(\frac{1}{4}\right)A_{r/q}\left(s+\frac{1}{2},\chi_{8h}\psi\right)A_{r/q}\left(s+\frac{1}{2},\chi_{8h}\overline\psi \right)}\left(\frac{8lqr}{\pi}\right)^s\frac{ds}{s}\Biggr)\nonumber
\end{eqnarray}
where we have defined
\begin{eqnarray}
\Gamma_1(s)=(2\pi)^{-s}\Gamma(s)\textrm{\emph{cas}}\left(\frac{\pi s}{2}\right)\nonumber
\end{eqnarray}
and
\begin{eqnarray}\label{keuler5}
\mathcal{K}_{\chi}(s;l,r)&=&\left(\frac{\chi(2)}{2^{2s-1}}-1\right)L\left(2s,\chi\right)L\left(2s+1,\chi^3\right)A_{r/q}\left(2s+1,\chi^3\right)\mathcal{E}_{\chi}(s;l,r),\nonumber
\end{eqnarray}
where
\begin{eqnarray}
\mathcal{E}_{\chi}(s;l,r)&=&
\prod_{\substack{p|l_1}}p^{\delta-1/2}\frac{\chi^{(\delta-1)/2}(p)}{p^{(\delta-1)s}}\left(1-\frac{1}{p}\right)\left(1-\frac{\chi(p)}{p}\right)\left(1+\frac{\chi^2(p)}{p^{2s}}\right)\nonumber\\
&\times&\prod_{\substack{p|l\\p\nmid l_1}}\frac{\chi^{\delta}(p)}{p^{(\delta-1)s}}\left(1-\frac{1}{p^{}}\right)\left(1-\frac{\chi^2(p)}{p^2}\right)
\nonumber\\
&\times&\prod_{p\nmid lr}\left(1-\frac{1}{p^{}}\right)\left(1-\frac{\chi(p)}{p^{}}\right)\left(1+\frac{1+\chi(p)}{p} +\frac{\chi^2(p)}{p^3} -\frac{\overline\chi(p)}{p^2}\left(\frac{\chi^4(p)}{p^{2s}}+p^{2s}    \right)   \right)\nonumber\\
\end{eqnarray}
is holomorphic and bounded for fixed $-1/2<\sigma<1/2$.
 \end{lemma}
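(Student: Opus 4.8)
The plan is to start from the formula \eqref{nondiag3} for $\mathcal{P}_2$, replace $\widetilde\Omega$ by its Mellin transform, and then evaluate in turn the sums over $m\in\mathcal{S}(r)$, over $k$, and over $\alpha$, recognising them respectively as the $A_{r/q}$-factors in the denominator of \eqref{almost}, as the Euler product $\mathcal{K}_{\overline\psi^2}$, and as a convergent tail producing the error term. \emph{Step 1 (Mellin inversion).} Since $\Omega(y)=y^{-1}\omega_2(1/y)$ by \eqref{omegadef2} and $\widetilde\Omega(x)=\int_0^\infty\Omega(t)\,\textrm{cas}(2\pi xt)\,dt$, combining the elementary identity $\int_0^\infty\textrm{cas}(2\pi v)v^{s-1}\,dv=\Gamma_1(s)$ with the Mellin transform $\int_0^\infty\omega_2(u)u^{s-1}\,du=s^{-1}\Gamma^2(\tfrac{s}{2}+\tfrac14)\Gamma^{-2}(\tfrac14)$ implicit in Definition~\ref{def1} yields, for $x>0$ and $0<c<\tfrac12$,
\[
\widetilde\Omega(x)=\frac{1}{2\pi i}\int_{(c)}\frac{\Gamma_1(s)\,\Gamma^2\!\left(\tfrac{s}{2}+\tfrac14\right)}{s\,\Gamma^2\!\left(\tfrac14\right)}\,x^{-s}\,ds,
\]
together with an analogous representation for $\widetilde\Omega(-x)$ in which $\textrm{cas}(\pi s/2)$ is replaced by $\cos(\pi s/2)-\sin(\pi s/2)$. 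Inserting these into \eqref{nondiag3}, the argument of $\widetilde\Omega$ being $\pi k^2 m/(8\alpha^2 lqr)$ (retaining the factor $\pi$ from Definition~\ref{def1}, cf.\ \eqref{inter}), converts each term into a contour integral over a fresh variable $s$ carrying the weights $m^{-s}$ and $k^{-2s}$ and the constant $(8lqr/\pi)^s$.

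\emph{Step 2 (the inner product as a Gauss sum).} By \eqref{cas}, the quantity paired against $\varphi_0\overline\psi\chi_{r/2}$ in \eqref{nondiag3} equals $\widetilde\Omega(x)\langle\cos(2\pi k^2\cdot/r),\varphi_0\overline\psi\chi_{r/2}\rangle+\widetilde\Omega(-x)\langle\sin(2\pi k^2\cdot/r),\varphi_0\overline\psi\chi_{r/2}\rangle$. Since $\psi$ is even the character $\varphi_0\overline\psi\chi_{r/2}$ is even, so the sine inner product vanishes and the $\widetilde\Omega(-x)$ term disappears — which is why only $\Gamma_1(s)$ occurs in \eqref{almost}. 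The remaining inner product is the Gauss sum $\sum_{a\bmod r}\overline{\varphi_0\overline\psi\chi_{r/2}}(a)\,e(k^2a/r)$; because $r$ is even and squarefree, $\overline{\varphi_0\overline\psi\chi_{r/2}}$ is induced from a primitive character of an odd modulus dividing $r/2$, and the classical evaluation of Gauss sums of imprimitive characters writes it as $\psi\chi_{r/2}(k)\,\tau(\overline\psi\chi_{r/2})$ times an explicit elementary factor — in particular it vanishes unless $(k,r/2)=1$, where $\psi\chi_{r/2}(k)^2=\psi^2(k)$. This produces the constants $\tau(\overline\psi\chi_{r/2})$ and $\psi\chi_{r/2}(\overline{2h}l)$ in \eqref{almost}, and the residual sign $(-1)^{k^2}$ coming from the prime $2$ is passed on to Step~3.

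\emph{Step 3 (the $m$- and $k$-sums).} The sum over $m\in\mathcal{S}(r)$ is $\sum_{m\in\mathcal{S}(r)}\chi_{8h}(m)\,d_\psi(m)\,m^{-1/2-s}$, which factors over $p\mid r$ — trivially at $p\mid q$ since $\psi(p)=0$, and as $(1-\chi_{8h}(p)\psi(p)p^{-1/2-s})^{-1}(1-\chi_{8h}(p)\overline\psi(p)p^{-1/2-s})^{-1}$ at $p\mid r/q$ by the identity $\sum_\beta d_\psi(p^\beta)x^\beta=(1-\psi(p)x)^{-1}(1-\overline\psi(p)x)^{-1}$ — hence equals $A_{r/q}(s+\tfrac12,\chi_{8h}\psi)^{-1}A_{r/q}(s+\tfrac12,\chi_{8h}\overline\psi)^{-1}$, the denominator of \eqref{almost}. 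For the $k$-sum one uses that $\mathcal{G}_{\overline\psi^2}(1;k^2,l,r,\alpha)$ is an Euler product in $k$ (multiplicativity of $G_{4k^2}$, Lemma~\ref{gausssum}), that the Gauss sum of Step~2 imposes $(k,r/2)=1$ and supplies $\psi^2(k)$, and that the split $k=2^j k''$ turns the sign $(-1)^{k^2}$ together with the trivial local factor at $2$ precisely into the $p=2$ contribution to $\mathcal{K}_{\overline\psi^2}$, namely $(\tfrac{\overline\psi^2(2)}{2^{2s-1}}-1)(1-\overline\psi^2(2)2^{-2s})^{-1}$. Evaluating the remaining Euler product over the odd primes via \eqref{hologauss} and \eqref{note1} (and, for $p\mid l$, the shift $l=\prod p^\delta$ in $G_{4k^2}(p^{\beta+\delta})$), together with the factorisation \eqref{cossimp0}--\eqref{cossimp} of $\mathcal{D}_{\overline\psi^2}$ whose residue at $s=1$ yields the constant $L(1,\varphi_0\overline\psi^2)$, collapses the $k$-sum into $L(2s,\overline\psi^2)L(2s+1,\overline\psi^6)A_{r/q}(2s+1,\overline\psi^6)$ times the convergent product $\mathcal{E}_{\overline\psi^2}(s;l,r)$ — i.e.\ into $\mathcal{K}_{\overline\psi^2}(s;l,r)$. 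Collecting all constants against the Mellin kernel of Step~1 gives the integrand of \eqref{almost} on the line $\Re s=\epsilon$; holomorphy and boundedness of $\mathcal{E}_{\overline\psi^2}(s;l,r)$ in $-\tfrac12<\sigma<\tfrac12$ follow at once from its explicit Euler product, each factor being $1+O_\sigma(p^{-1-2|\sigma|})$.

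\emph{Step 4 and the main obstacle.} After Step~3 the only $\alpha$-dependence resides in $\psi\chi_{r/2}(\alpha^2)$ and in a tail of mild local factors of $\mathcal{K}_{\overline\psi^2}$ at $p\mid\alpha$; replacing $\sum_{\alpha\le Y}\mu(\alpha)/\alpha^2$ by the complete sum $\sum_{\alpha\ge 1}\mu(\alpha)/\alpha^2$ costs $O(Y^{-1})$ times the absolutely convergent sums over $k$ and $m$ (convergent by Lemma~\ref{xibounds} and the bound \eqref{Gbound}), and tracking the powers of $l$ and $r$ through the prefactor $1/(lr^2)$ and the divisor-type estimates produces the error $O_{\epsilon,\Phi}(l^\epsilon r^{\epsilon-2}Y^{\epsilon-1})$ of the Lemma, what remains being $\mathcal{N}_{h,l,r,\psi,\Phi}$ exactly as in \eqref{almost}. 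The main obstacle is Step~3: matching the Euler product of the $k$-sum — built from the values $G_{4k^2}(p^{\beta+\delta})$ twisted by $d_\psi$ and the characters — to the rather intricate shape of $\mathcal{K}_{\overline\psi^2}(s;l,r)$, and in particular getting the local factors at the bad primes ($p=2$, $p\mid r$, $p\mid l$, $p\mid k$) and the cancellation of the spurious $L(2s+1,\overline\psi^2)$-factor right; Steps~1, 2 and 4 are routine once this identity and the decay estimates of Lemma~\ref{xibounds} are available.
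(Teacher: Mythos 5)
Your proposal follows essentially the same route as the paper: evaluate the character inner product as a Gauss sum (your observation that the sine part dies because $\varphi_0\overline\psi\chi_{r/2}$ is even is exactly why only $\widetilde\Omega(+x)$ and hence $\Gamma_1$ survives), Mellin-invert $\widetilde\Omega$, recognise the $m$-sum as the $A_{r/q}$-factors, factor the $k$-sum (jointly with the $\alpha$-sum) into $L(2s,\chi)L(2s+1,\chi^3)$ times a bounded Euler product, and shift the contour to $\Re s=\epsilon$ before completing the $\alpha$-sum to get the $Y^{\epsilon-1}$ error. The one point to be careful about is order of operations — the $k$-Dirichlet series converges absolutely only for $\Re s>1/2$, so the interchange must be performed there and the shift to $\Re s=\epsilon$ justified afterwards by the holomorphy of the Euler product you identify in Step 3 (this is how the paper proceeds), and the "main obstacle" you flag (the local-factor bookkeeping at $p=2$, $p\mid r/q$, $p\mid\alpha$, $p\mid l$) is indeed where the bulk of the paper's proof is spent.
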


\begin{proof}
Using the Chinese remainder theorem and the fact that $\overline\psi\chi_{r/2}$ is the even primitive character that induces the character $\varphi_0\overline\psi\chi_{r/2}$, we begin by computing the inner product
\begin{eqnarray}\label{gausssum2}
\left\langle  e\left(\pm \frac{k^2\cdot}{r}\right),   \varphi_0\overline\psi\chi_{r/2}  \right\rangle &=& e\left(\frac{k^2}{2}\right)\sum_{a \pmod {r/2}}\overline\psi\chi_{r/2}(2a)e\left(\frac{ak^2}{r/2}\right)\nonumber\\
&=&(-1)^k\overline\psi\chi_{r/2}\left(2k^2\right)\tau\left(\overline\psi\chi_{r/2}\right)
\end{eqnarray}
where $\tau$ is the Gauss sum. By  (\ref{gausssum2}), the display in the second and third lines of (\ref{nondiag3}) is the real part of
\begin{eqnarray}\label{nondiag4}
L\left(1, \varphi_0\overline\psi^2 \right) \tau\left(\overline\psi\chi_{r/2}\right)\psi\chi_{r/2}\left(\overline {2h}\alpha^2\overline k^2l\right)(-1)^k\mathcal{G}_{\overline\psi^2}(1;k^2,l,r,\alpha)\widetilde\Omega\left(\frac{k^2m}{8\alpha^2lqr}\right),\nonumber\\
\end{eqnarray}
thus
\begin{eqnarray}\label{nondiagfour}
\mathcal{P}_2&=&\frac{4\widehat\Phi(0)\varphi_0(l)}{lr^2}\Re\Biggr(     L\left(1, \varphi_0\overline\psi^2 \right) \tau\left(\overline\psi\chi_{r/2}\right)\psi\chi_{r/2}\left(\overline {2h}l\right)\sum_{m\in\mathcal{S}(r)}\frac{\chi_{8h}(m)d_\psi(m)}{m^{1/2}}
\nonumber\\
&\times&\sum_{\substack{\alpha\leq Y\\(\alpha,lr)=1}}\frac{\mu(\alpha)\psi^2\left(\alpha\right)}{\alpha^2}\sum_{k=1}^{\infty}(-1)^k\overline\psi\left(k^2\right)\mathcal{G}_{\overline\psi^2}(1;k^2,l,r,\alpha)\widetilde\Omega\left(\frac{k^2m}{8\alpha^2lqr}\right)\Biggr).\nonumber\\
\end{eqnarray}
\\

Definition \ref{def1} and (\ref{omegadef2}) allow us to write $\widetilde\Omega$ in terms of $\Gamma$ and $\Gamma_1$ by Mellin inversion (interchanging the order of summation and integration is justified due to the rapid decay of the $\Gamma$-function on vertical lines and Lemma \ref{xibounds}) and, using (\ref{adef}) to express the sum over $m$ in terms of $A_{r/q}$, the triple sum in (\ref{nondiagfour}) is equal to 
\begin{eqnarray}\label{nondiagint}
&&\sum_{\substack{\alpha\leq Y\\(\alpha,lr)=1}}\frac{\mu(\alpha)\psi^2\left(\alpha\right)}{\alpha^2}\nonumber\\
&\times& \frac{1}{2\pi i}\int_{(1/2<c<1)}\frac{\Gamma^2\left(\frac{s}{2}+\frac{1}{4}\right)\Gamma_1(s)\mathcal{H}_{\overline\psi^2}(s;l,r,\alpha)}{\Gamma^2\left(\frac{1}{4}\right)A_{r/q}\left(s+\frac{1}{2},\chi_{8h}\psi\right)A_{r/q}\left(s+\frac{1}{2},\chi_{8h}\overline\psi \right)}\left(\frac{8\alpha^2lqr}{\pi}\right)^s\frac{ds}{s}\nonumber\\
\end{eqnarray}
where we have put
\begin{eqnarray}
\mathcal{H}_{\chi}(s;l,r,\alpha)=\sum_{k=1}^{\infty}\frac{(-1)^k\chi\left(k\right)}{k^{2s}}\mathcal{G}_{\chi}(1;k^2,l,r,\alpha),\nonumber
\end{eqnarray}
which is absolutely convergent for $\sigma>1/2$ by (\ref{Gbound}). \\

 To complete the sum over $\alpha$ we move the path of integration so that $0<c<1/2$, which we now justify. By Lemma \ref{gausssum} note that 
 \begin{eqnarray}
 \mathcal{G}_{\chi,p}(s;k^2)= \mathcal{G}_{\chi,p}(s;p^{2\gamma})\nonumber
 \end{eqnarray}
where $k^2=\prod p^{2\gamma}$, so
\begin{eqnarray}
\mathcal{H}_{\chi}(s;l,r,\alpha)=\left(\frac{\chi(2)}{2^{2s-1}}-1\right)\mathcal{H}^*_{\chi}(s;l,r,\alpha)\nonumber
\end{eqnarray}
where we have put
\begin{eqnarray}
\mathcal{H}^*_{\chi}(s;l,r,\alpha)=\prod_p\mathcal{H}^*_{\chi,p}(s)\nonumber
\end{eqnarray}
in which 
\begin{eqnarray}\label{heuler}
\mathcal{H}^*_{\chi,p}(s)=\sum_{\gamma=0}^{\infty}\frac{\chi\left(p^{\gamma}\right)}{p^{2\gamma s}}\mathcal{G}_{\chi,p}(1;p^{2\gamma}).
\end{eqnarray}
The Euler factors $\mathcal{H}^*_{\chi,p}(s)$ in (\ref{heuler}) above are distinguished by the six mutually exclusive cases $p|q$, $p|\frac{r}{q}$, $p|\alpha$, $p\nmid \alpha lr$, $p|l_1$ and $p|l$ but $p\nmid l_1$. These are 

\begin{eqnarray}\label{holotwo}
   = \begin{cases}
    1 &  \text{if }p| q,
    \\
    \\
    \left(1-\frac{\chi(p)}{p^{2s}}\right)^{-1} & \text{if }p|\frac{r}{q} ,
    \\
     \\
     \left(1-\frac{1}{p}\right)\left(1-\frac{\chi(p)}{p}\right)\left(1-\frac{\chi(p)}{p^{2s}}\right)^{-1} & \text{if }p|\alpha ,
     \\
      \\
     \left(1-\frac{\chi(p)}{p^{2s}}\right)^{-1}\left(1-\frac{\chi^3(p)}{p^{2s+1}}\right)^{-1}\left(1-\frac{1}{p^{}}\right)\left(1-\frac{\chi(p)}{p^{}}\right)\left(1+\frac{1+\chi(p)}{p}-\frac{\chi^3(p)}{p^{2s+2}}\right) & \text{if }p\nmid \alpha lr.
     \\
     \\
     p^{\delta-1/2}\frac{\chi^{(\delta-1)/2}(p)}{p^{(\delta-1)s}}\left(1-\frac{\chi(p)}{p^{2s}}\right)^{-1}\left(1-\frac{\chi^3(p)}{p^{2s+1}}\right)^{-1}\left(1-\frac{1}{p}\right)\left(1-\frac{\chi(p)}{p}\right)\left(1+\frac{\chi^2(p)}{p^{2s}}\right)
      & \text{if $p|l_1$},
     \\
     \\
     \frac{\chi^{\delta}(p)}{p^{(\delta-1)s}}\left(1-\frac{\chi(p)}{p^{2s}}\right)^{-1}\left(1-\frac{\chi^3(p)}{p^{2s+1}}\right)^{-1}\left(1-\frac{1}{p^{}}\right)\left(1-\frac{\chi^2(p)}{p^2}\right)&\text{if $p|l$, but $p\nmid l_1$}.
    \end{cases}\nonumber\\
\end{eqnarray}
To prove (\ref{holotwo}), note that the cases $p|\alpha r$ are immediate from (\ref{holo}), (\ref{note1}) and (\ref{note2}). For the cases $p|l$ and  $p\nmid \alpha lr$ we use Lemma \ref{gausssum} and (\ref{note1}). In the latter case the factors are
\begin{eqnarray} \label{efaccomp}
&&\left(1-\frac{1}{p}\right)\left(1-\frac{\chi(p)}{p}\right)\sum_{\gamma=0}^{\infty}\frac{\chi^{\gamma}\left(p\right)}{p^{2\gamma s}}
      \sum_{\beta=0}^{\infty}
      \frac{1}{p^{\beta}}\sum_{\delta=0}^{\beta} \chi^{\delta}(p)\frac{G_{p^{2\gamma}}(p^{\beta})}{p^{\beta/2}}\nonumber\\
&=&\left(1-\frac{1}{p}\right)\left(1-\frac{ \chi(p)}{p}\right)\sum_{\gamma=0}^{\infty}\frac{ \chi^{\gamma}(p)}{p^{2\gamma s}}\nonumber\\
&\times&
      \left(1+\left(1-\frac{1}{p}\right)\sum_{\beta=1}^{\gamma}\frac{1}{p^{\beta }}\sum_{\delta=0}^{2\beta} \chi^{\delta}(p)
      +    \frac{1}{p^{\gamma+1}}\sum_{\beta=0}^{2\gamma+1} \chi^{\beta}(p)
     \right).
     \end{eqnarray}
Using the identity 
\begin{eqnarray}\label{identity}
&&1+\left(1-y\right)\sum_{\beta=1}^{\gamma}y^{\beta}\sum_{\delta=0}^{2\beta} x^{\delta}+y^{\gamma+1}\sum_{\beta=0}^{2\gamma+1}x^{\beta}=
y+\frac{1+xy}{1-x^2y}\left(1-\left(x^2y\right)^{\gamma+1}\right)\nonumber
\end{eqnarray}
the display in  (\ref{efaccomp}) is equal to 
\begin{eqnarray}\label{eulerderiv}
&&\left(1-\frac{1}{p}\right)\left(1-\frac{\chi(p)}{p}\right)\Biggr(1+\frac{1+\chi(p)}{p}\nonumber\\
&+&\sum_{\gamma=1}^{\infty}\frac{\chi(p)}{p^{2\gamma s}}\left(\frac{1}{p}+\frac{1+\frac{\chi(p)}{p}}{1-\frac{\chi^2(p)}{p}}-\frac{1+\frac{\chi(p)}{p}}{1-\frac{\chi^2(p)}{p}}\left(\frac{\chi^2(p)}{p}\right)^{\gamma+1}\right)\Biggr)\nonumber\\
&=&
\left(1-\frac{1}{p}\right)\left(1-\frac{\chi(p)}{p}\right)\Biggr(1+\frac{1+\chi(p)}{p}\nonumber\\
&+&\left(\frac{1}{p}+\frac{1+\frac{\chi(p)}{p}}{1-\frac{\chi^2(p)}{p}}\right)\left(\frac{1}{1-\frac{\chi(p)}{p^{2s}}}-1\right)-\frac{\chi^2(p)}{p}\frac{1+\frac{\chi(p)}{p}}{1-\frac{\chi^2(p)}{p}}\left(\frac{1}{1-\frac{\chi^3(p)}{p^{2s+1}}}-1\right)\Biggr)
\nonumber\\
&=&\left(1-\frac{\chi(p)}{p^{2s}}\right)^{-1}\left(1-\frac{\chi^3(p)}{p^{2s+1}}\right)^{-1}\left(1-\frac{1}{p^{}}\right)\left(1-\frac{\chi(p)}{p^{}}\right)\left(1+\frac{1+\chi(p)}{p}-\frac{\chi^3(p)}{p^{2s+2}}\right) \nonumber\\
\end{eqnarray}
which proves the $p\nmid \alpha lr$ case in (\ref{holotwo}). The $p|l$ cases may be proved in a similar manner. \\

It follows that the function 
\begin{eqnarray}\label{facbound}
\frac{\mathcal{H}_{\chi}^*(s;l,r,\alpha)}{L\left(2s,\chi\right)L\left(2s+1,\chi^3\right)}\nonumber
\end{eqnarray} 
 is holomorphic and bounded for every fixed $\sigma> -1/2$ and so the rapid decay of the $\Gamma$ functions in the integrand  in (\ref{nondiagint}) permits us to move the path of integration to any vertical line with $-1/2<c<1$. Moving to the line $c=\epsilon$ for any $0<\epsilon<1/2$ we may complete the sum over $\alpha$ introducing an error that is $\ll l^{\epsilon}r^{\epsilon-2}Y^{\epsilon-1}$. Thus \begin{eqnarray}\label{Rbound4}
\mathcal{P}_2=\mathcal{N}_{h,l,r,\psi,\Phi}+O\left(l^{\epsilon}r^{\epsilon-2}Y^{\epsilon-1}\right)
\end{eqnarray}
where
\begin{eqnarray}\label{almost}
\mathcal{N}_{h,l,r,\psi,\Phi}&=&\frac{4\widehat\Phi(0)\varphi_0(l)}{lr^2}\Re\Biggr(     \frac{L\left(1, \varphi_0\overline\psi^2 \right) \tau\left(\overline\psi\chi_{r/2}\right)\psi\chi_{r/2}\left(\overline {2h}l\right)}{2\pi i}
\nonumber\\
&\times&\int_{(\epsilon)}\frac{\Gamma^2\left(\frac{s}{2}+\frac{1}{4}\right)\Gamma_1(s)\mathcal{K}_{\overline\psi^2}(s;l,r)}{\Gamma^2\left(\frac{1}{4}\right)A_{r/q}\left(s+\frac{1}{2},\chi_{8h}\psi\right)A_{r/q}\left(s+\frac{1}{2},\chi_{8h}\overline\psi \right)}\left(\frac{8lqr}{\pi}\right)^s\frac{ds}{s}\Biggr),\nonumber\\
\end{eqnarray}
so
\begin{eqnarray}
\mathcal{K}_{\chi}(s;l,r)=\left(\frac{\chi(2)}{2^{2s-1}}-1\right)\mathcal{K}^*_{\chi}(s;l,r)\nonumber
\end{eqnarray}
where we have put
\begin{eqnarray}\label{keuler}
\mathcal{K}_{\chi}^*(s;l,r)=\sum_{\substack{\alpha\geq 1\\(\alpha,lr)=1}}\frac{\mu(\alpha)\chi\left(\alpha\right)}{\alpha^{2-2s}} \mathcal{H}^*_{\chi}(s;l,r,\alpha).
\end{eqnarray}

To derive the factorisation of $\mathcal{K}_{\chi}(s;l,r)$ given in (\ref{keuler5}), we use (\ref{heuler}) and (\ref{holotwo}) to write $\mathcal{K}_{\chi}^*(s;l,r)$ as an Euler product. Firstly, using  (\ref{heuler}), we observe that
\begin{eqnarray}\label{keuler2}
\mathcal{K}_{\chi}^*(s;l,r)=\prod_{p|lr}\mathcal{H}^*_{\chi,p}(s)\sum_{\substack{\alpha\geq 1\\(\alpha,lr)=1}}\frac{\mu(\alpha)\chi\left(\alpha\right)}{\alpha^{2-2s}} \prod_{p\nmid lr\alpha}\mathcal{H}^*_{\chi,p}(s)\prod_{p|\alpha}\mathcal{H}^*_{\chi,p}(s)
\end{eqnarray}
and, using (\ref{holotwo}), that the double product inside the summation on the right hand side of (\ref{keuler2}) is 

\begin{eqnarray}\label{keuler3}
&&\prod_{p\nmid lr}\left(1-\frac{\chi(p)}{p^{2s}}\right)^{-1}\left(1-\frac{\chi^3(p)}{p^{2s+1}}\right)^{-1}\left(1-\frac{1}{p^{}}\right)\left(1-\frac{\chi(p)}{p^{}}\right)\nonumber\\
&\times&\prod_{p\nmid lr\alpha}\left(1+\frac{1+\chi(p)}{p}-\frac{\chi^3(p)}{p^{2s+2}}\right)\prod_{p|\alpha}\left(1-\frac{\chi^2(p)}{p^{2s+1}}\right).
\end{eqnarray}
Next, observe that the second line of (\ref{keuler3}) is equal to
\begin{eqnarray}\label{keuler4}
\prod_{p\nmid lr}\left(1+\frac{1+\chi(p)}{p}-\frac{\chi^3(p)}{p^{2s+2}}\right)\prod_{p|\alpha}\frac{1-\frac{\chi^2(p)}{p^{2s+1}}}{1+\frac{1+\chi(p)}{p}-\frac{\chi^3(p)}{p^{2s+2}}},\nonumber
\end{eqnarray}
so the right hand side of (\ref{keuler2}) is
\begin{eqnarray}\label{keuler6}
&&L\left(2s,\chi\right)L\left(2s+1,\chi^3\right)\prod_{p|\frac{lr}{q}}\mathcal{H}^*_{\chi,p}(s)\left(1-\frac{\chi(p)}{p^{2s}}\right)\left(1-\frac{\chi^3(p)}{p^{2s+1}}\right)\nonumber\\
&\times&\prod_{p\nmid lr}\left(1-\frac{1}{p^{}}\right)\left(1-\frac{\chi(p)}{p^{}}\right)\left(1+\frac{1+\chi(p)}{p}-\frac{\chi^3(p)}{p^{2s+2}}\right)\nonumber\\
&\times&\sum_{\substack{\alpha\geq 1\\(\alpha,lr)=1}}\frac{\mu(\alpha)\overline\chi(\alpha)}{\alpha^{2-2s}}\prod_{p|\alpha}\frac{1-\frac{\chi^2(p)}{p^{2s+1}}}{1+\frac{1+\chi(p)}{p}-\frac{\chi^3(p)}{p^{2s+2}}}.
\nonumber\\
&=&L\left(2s,\chi\right)L\left(2s+1,\chi^3\right)A_{r/q}\left(2s+1,\chi^3\right)\prod_{p|l}\mathcal{H}^*_{\chi,p}(s)\left(1-\frac{\chi(p)}{p^{2s}}\right)\left(1-\frac{\chi^3(p)}{p^{2s+1}}\right)\nonumber\\
&\times&\prod_{p\nmid lr}\left(1-\frac{1}{p^{}}\right)\left(1-\frac{\chi(p)}{p^{}}\right)\left(1+\frac{1+\chi(p)}{p} +\frac{\chi^2(p)}{p^3} -\frac{\overline\chi(p)}{p^2}\left(\frac{\chi^4(p)}{p^{2s}}+p^{2s}    \right)   \right)\nonumber\\
\end{eqnarray}
and we note that the Euler product in (\ref{keuler4}) is holomorphic and bounded for any fixed  $-1/2<\sigma<1/2$. To complete the proof we use (\ref{holotwo}) to express the $p|l$ cases. 
\end{proof}

\subsubsection{Bounds on the remaining non-diagonal terms}\label{Rev}
In this section we show that the sums and integral defining $\mathcal{R}$ in (\ref{Rdef}) are absolutely convergent and find a suitable bound for $\mathcal{R}$. 
Firstly, from the definition (\ref{secmndiag}) and Lemma \ref{xibounds}, we observe that the terms with $n>X^{1+\epsilon}$ are exponentially decreasing in $X$, so  the $k$th term in  (\ref{secmndiag}) is 
\begin{eqnarray}
\ll    \left|\frac{Y^2lrX^{\epsilon}    }{k}  \right|^{\nu}\nonumber
\end{eqnarray}
for any fixed $\nu>0$. Since we are also assuming that $Y^2lr\ll X^{1/2-\delta}$ for some fixed $\delta>0$, we may ignore the terms with $|k|\geq X^{1/2-\delta}$ in (\ref{Rdef}) (with a new $\delta$), which introduces an error that is smaller than any fixed negative power of $X$. Thus
\begin{eqnarray}\label{Rdef2}
|\mathcal{R}|&=&\sum_{0<|k|<X^{1/2-\delta}}\sum_{m\in\mathcal{S}(r)} \sum_{\substack{\alpha\leq Y\\(\alpha,l)=1}}\sum_{\varphi\pmod r} \frac{2\chi_{8h}(m)d_\psi(m)\mu(\alpha)\varphi(\alpha^2l)}{m^{1/2}  \alpha^2 lr\phi(r) \pi i}\int_{(\epsilon-1/2)}   \mathcal{D}_{\psi, \varphi}(w+1;k,l,r,\alpha)\nonumber\\
&\times &\left(  \left\langle \cos\left(\frac{2\pi kh\cdot}{r}\right) ,\overline\varphi  \right\rangle  f\left(\frac{kmX}{\alpha^2 lr},w\right) 
+\left\langle \sin\left(\frac{2\pi kh\cdot}{r}\right) ,\overline\varphi  \right\rangle f\left(-\frac{kmX}{\alpha^2 lr},w\right)       \right)    \frac{ dw}{m^{w}}\nonumber\\
&+&O(X^{-\nu})\nonumber\\
\end{eqnarray}
for any $\nu>0$. Soundararajan (\cite{S2}, Lemma 5.2) has shown that $f(\xi,s)$, $\xi\neq 0$, is a holomorphic function of $s$ in $\Re s>-1$, and in the region $-1<\Re s<1$ satisfies the bound 
 \begin{eqnarray}\label{soundbound}
|f(\xi,s)|\ll(1+|s|)^{-\Re s -1/2}\exp\left(-\frac{1}{10}\frac{\sqrt{|\xi|}}{\sqrt{X(1+|s|)}}\right)|\xi|^{\Re s}|\check\Phi(s)|.
\end{eqnarray}
Thus, by (\ref{holo}), (\ref{soundbound}), the Gauss sum bound $\left\langle e\left(kh\cdot/r\right),\overline\varphi  \right\rangle\ll r^{1/2}$ and the fact that $\check\Phi(w)\ll |w|^{-\nu}$ for any $\nu>0$ because $\Phi(w)$ is compactly supported, we have
\begin{eqnarray}\label{boundingR}
|\mathcal{R}|&\ll&\frac{r^{1/2+\epsilon}}{l^{1/2}X^{1/2-\epsilon}}\sum_{0<|k|<X^{1/2-\delta}}\sum_{\alpha\leq Y}\frac{1}{|k|^{1/2-\epsilon}\alpha}\max_{\varphi  (\textrm{mod } r)}\int_{(1/2+\epsilon)}\left| \mathcal{G}_{\psi,\varphi}(s;k,l,\alpha)\right|\nonumber\\
&\times&\left|L\left(s,\psi\varphi\left(\frac{k_1r/2}{\cdot}\right)\right)L\left(s,\overline\psi\varphi\left(\frac{k_1r/2}{\cdot}\right)\right)   \right|\exp\left(-\frac{1}{10|s|Y}\sqrt{\frac{|k|}{lr}}\right)\frac{|ds|}{|s|^{\nu}}. \nonumber\\
\end{eqnarray}
Using a convexity bound  on the path of integration in (\ref{boundingR}) such as $|L(s,\chi)|\leq C(q|s|)^{1/4}$ due to Kolesnik \cite{K}, for instance, and noting that the modulus of the characters in (\ref{boundingR}) is $< qk_1r^2$, it follows that (\ref{boundingR}) is
\begin{eqnarray}
&\ll&\frac{r^{3/2+\epsilon}}{l^{1/2}X^{1/2-\epsilon}}\sum_{0<|k|<X^{1/2-\delta}}\sum_{\alpha\leq Y}\frac{1}{|k|^{1/4-\epsilon}\alpha}\max_{\substack{\varphi  (\textrm{mod } r)\\\Re s=1/2+\epsilon}}\left| \mathcal{G}_{\psi,\varphi}(s;k,l,\alpha)\right|\nonumber\\
&\times&\int_{(1/2+\epsilon)}\exp\left(-\frac{\sqrt{|k|}}{10|s|Y\sqrt{lr}}\right)\frac{|ds|}{|s|^{\nu}}\nonumber
\end{eqnarray}
and, by (\ref{Gbound}), this is  
\begin{eqnarray}
&\ll&\frac{l^{1/2+\epsilon}r^{3/2+\epsilon}(Y^2lr)^{\nu/2}}{X^{1/2-\epsilon}}\sum_{0<k<X^{1/2-\delta}}\sum_{\alpha\leq Y}\frac{1}{k^{\nu/2+1/4-\epsilon}\alpha^{1-\epsilon}}\nonumber
\end{eqnarray}
for any $\nu>1$. Taking $\nu=1+\epsilon$, we conclude that 
\begin{eqnarray}\label{Rbound6}
|\mathcal{R}|\ll \frac{l^{1+\epsilon}r^{2+\epsilon}Y^{1+\epsilon}}{X^{3/8-\epsilon}}. 
\end{eqnarray}

\subsection{The diagonal term}\label{diagsm} One may begin the evaluation of the diagonal term $\mathcal{P}_1$ of $\mathcal{M}_2$ in the same way as the diagonal term $\mathcal{D}$ in the expected value $\mathcal{E}$ that we carried out in Section \ref{diagevm}. Here the analogue of (\ref{gcd}) is
\begin{eqnarray}\label{Rbound8}
\mathcal{P}_1=P_{h,l,r,X,\psi}+O\left(\frac{X^{\epsilon}}{l_1^{1/2}r^{1/2-\epsilon}Y}\right)
\end{eqnarray}
where 
\begin{eqnarray}\label{an02}
P_{h,l,r,X,\psi}=\frac{2\varphi_0(l_1)}{L(2,\varphi_0)l_1^{1/2}r}    \sum_{m\in\mathcal{S}(r)}\left(\frac{8h}{m}\right)\frac{d_{\psi}(m)}{m^{1/2}}
\sum_{1}^{\infty}\frac{\varphi_0(n)d_{\psi}(l_1n^2)\widehat F_{2,l_1mn^2}(0)}{n\prod_{p| ln}\left(1+p^{-1}\right)}\nonumber
\end{eqnarray}
and the error term in (\ref{Rbound8}) is easily seen because $d_{\psi}(m) \ll m^{\epsilon}$. Using Definition \ref{def1} to write $\widehat F_{2,l_1mn^2}(0)$ as a double integral and using (\ref{phimel}), we get
\begin{eqnarray}\label{ant}
P_{h,l,r,X,\psi}&=&\frac{\varphi_0(l_1)}{L(2,\varphi_0)l_1^{1/2}r} \nonumber\\
&\times& \frac{1}{2\pi i}    \int_{(c>0)}\frac{\Gamma^2\left(\frac{s}{2}+\frac{1}{4}\right)\check\Phi\left(s\right)\mathcal{L}_{\psi}(2s+1;l,r)}{\Gamma^2\left(\frac{1}{4}\right)A_{r/q}\left(s+\frac{1}{2},\chi_{8h}\psi\right)A_{r/q}\left(s+\frac{1}{2},\chi_{8h}\overline\psi\right)}\left(\frac{8qX}{\pi l_1}\right)^{s}\frac{ds}{s}
\nonumber\\
\end{eqnarray}
where 
\begin{eqnarray}
\mathcal{L}_{\psi}(s;l,r)=\sum_{1}^{\infty}\frac{\varphi_0(n)d_{\psi}(l_1n^2)}{n^{s}\prod_{p| ln}\left(1+p^{-1}\right)}\nonumber
\end{eqnarray}
is absolutely convergent for $\sigma>1$.\\

Next, we note that Soundararajan (\cite{S2}, Lemma 5.1) has computed the Euler product for $\mathcal{L}_{\psi}(s;l,2q)$; the difference in our case is just the omission of more Euler factors when $2q$ is a proper factor of $r$, thus 
\begin{eqnarray}\label{final}
\mathcal{L}_{\psi}(s;l,r)=d_{\psi}(l_1)\zeta(s)L(s,\psi^2)L(s,\overline\psi^2)\eta_{\psi}(s;l,r)
\end{eqnarray}
where 
\begin{eqnarray}
\eta_{\psi}(s;l,r)=\prod_p\eta_{\psi,p}(s)\nonumber
\end{eqnarray}
is absolutely convergent for $\sigma>1/2$ and the Euler factors are given by
\begin{eqnarray}\label{holothree}
 \eta_{\psi,p}(s) = \begin{cases}
    1-\frac{1}{p^{s}} &  \text{if }p| q,
    \\\\
    \left(1-\frac{1}{p^{s}}\right) \left(1-\frac{\psi(p^2)}{p^{s}}\right) \left(1-\frac{\overline\psi(p^2)}{p^{s}}\right) & \text{if }p|\frac{r}{q} ,
    \\ 
     \\
     \frac{1}{1+p^{-1}}\left(1-\frac{1}{p^{s}}\right) & \text{if }p|l_1,
     \\
     \\
     \frac{1}{1+p^{-1}}\left(1-\frac{1}{p^{2s}}\right) & \text{if }p|l\textrm{ but }p\nmid l_1,
     \\
     \\
     1-\frac{d_{\psi}^2(p)}{p^s(p+1)} \left(1-\frac{1}{p^{s}}\right)+\frac{1}{p^s(p+1)}-\frac{1}{p^{2s}}-\frac{1}{p^{2s}(p+1)} & \text{if }p\nmid lr.
    \end{cases}\nonumber\\
\end{eqnarray}
Due to the rapid decay of the $\Gamma$ function and $\check\Phi$ on vertical lines, we may move the path of integration in (\ref{ant}) to the line $c=\epsilon-1/4$ for some fixed $0<\epsilon<1/4$ so that  (\ref{ant}) is equal to 
\begin{eqnarray}\label{ant2}
&&\frac{\varphi_0(l_1)d_{\psi}(l_1)}{L(2,\varphi_0)l_1^{1/2}r} \nonumber\\
&\times& \textrm{Res}\left(\frac{\Gamma^2\left(\frac{s}{2}+\frac{1}{4}\right)\check\Phi\left(s\right)\zeta(2s+1)L(2s+1,\psi^2)L(2s+1,\overline\psi^2)\eta_{\psi}(2s+1;l,r)}{\Gamma^2\left(\frac{1}{4}\right)A_{r/q}\left(s+\frac{1}{2},\chi_{8h}\psi\right)A_{r/q}\left(s+\frac{1}{2},\chi_{8h}\overline\psi\right)s}\left(\frac{8qX}{\pi l_1}\right)^{s}: s=0\right)
\nonumber\\
&+&O\left(\frac{l^{\epsilon-1/2}r^{\epsilon-1}}{X^{1/4-\epsilon}}\right).\nonumber\\
\end{eqnarray}
Combining the error terms (\ref{Rbound2}), (\ref{Rbound4}), (\ref{Rbound6}), (\ref{Rbound8}) and (\ref{ant2}), we obtain the error term (\ref{th2errorbound}) of Theorem \ref{th2}. Finally, to get the diagonal main terms, we evaluate the residue in (\ref{ant2}). Since 
\begin{eqnarray}
\frac{\zeta(2s+1)}{s}=\frac{1}{2s^2}+\frac{\gamma}{s}+O(1)\nonumber
\end{eqnarray}
 as $s\rightarrow 1$, the residue is 
\begin{eqnarray}
&&\frac{\widehat\Phi\left(0\right)\left|L(1,\psi^2)\right|^2\eta_{\psi}(1;l,r)}{\left|A_{r/q}\left(\frac{1}{2},\chi_{8h}\psi\right)\right|^2}\left(\frac{\log X}{2}+\gamma\right)
\nonumber\\
&+& \frac{1}{2}\frac{d}{ds}\frac{\Gamma^2\left(\frac{s}{2}+\frac{1}{4}\right)\check\Phi\left(s\right)L(2s+1,\psi^2)L(2s+1,\overline\psi^2)\eta_{\psi}(2s+1;l,r)}{\Gamma^2\left(\frac{1}{4}\right)A_{r/q}\left(s+\frac{1}{2},\chi_{8h}\psi\right)A_{r/q}\left(s+\frac{1}{2},\chi_{8h}\overline\psi\right)}\left(\frac{8q}{\pi l_1}\right)^{s}\biggr\vert _{s=0},\nonumber
\end{eqnarray}
which completes our proof of Theorem \ref{th2}.

\subsection{The non-diagonal main term when $\psi$ is quartic}\label{quarticderiv}
For simplicity take $l=1$, $r=2q$ and let $\psi$ be a fixed even primitive non-quadratic quartic character  so $\psi^4=\psi_0$ and $\psi^2\neq \psi_0$. The integral representation in (\ref{almost}) reduces to 
\begin{eqnarray}\label{almostspecial}
&&\frac{\widehat\Phi(0)}{q^2}\Re\Biggr( \frac{ L\left(1, \varphi_0\overline\psi^2 \right) \tau\left(\overline\psi\right)\psi\left(\overline {2h}\right)}{2\pi i}\int_{(\epsilon)}\frac{\Gamma^2\left(\frac{s}{2}+\frac{1}{4}\right)\Gamma_1(s)\mathcal{K}_{\overline\psi^2}(s;1,2)}{\Gamma^2\left(\frac{1}{4}\right)}\left(\frac{16q}{\pi}\right)^s\frac{ds}{s}\Biggr).\nonumber\\
\end{eqnarray}
By (\ref{keuler5}), the fact that $\psi^2=\psi^6$ is real and $\psi^4=\psi^8=\psi_0$, we have
\begin{eqnarray}\label{keuler6}
\mathcal{K}_{\overline\psi^2}(s;1,2)=2^{1-2s}\psi^2(2)L\left(2s,\psi^2\right)L\left(2s+1,\psi^2\right)\left(1-\frac{\psi^2(2)}{2^{1+2s}}\right)\left(1-\frac{\psi^2(2)}{2^{1-2s}}\right)\mathcal{E}_{\psi^2}(s;1,2q)\nonumber
\end{eqnarray}
where
\begin{eqnarray}
\mathcal{E}_{\psi^2}(s;1,2q)=\prod_{p\textrm{ odd}}\left(1-\frac{1}{p^{}}\right)\left(1-\frac{\psi(p^2)}{p^{}}\right)\left(1+\frac{1+\psi(p^2)}{p} +\frac{1}{p^3} -\frac{\psi(p^2)}{p^2}\left(\frac{1}{p^{2s}}+p^{2s}    \right)   \right)\nonumber
\end{eqnarray}
so the integrand in (\ref{almostspecial}) is
\begin{eqnarray}\label{integrand}
\frac{\mathcal{J}_{\psi^2}(s;1,2q)}{s}&=&\frac{2\psi^2(2)}{\Gamma^2\left(\frac{1}{4}\right)s}\left(1-\frac{\psi^2(2)}{2^{1+2s}}\right)\left(1-\frac{\psi^2(2)}{2^{1-2s}}\right)\mathcal{E}_{\psi^2}(s;1,2q)\nonumber\\
&\times&\left(\frac{4q}{\pi}\right)^s\Gamma^2\left(\frac{s}{2}+\frac{1}{4}\right)\Gamma_1(s)L\left(2s,\psi^2\right)L\left(2s+1,\psi^2\right).
\end{eqnarray}
In the case $\psi=1$, Soundararajan \cite{S2} noticed that the functional equation of $\zeta(s)$ and the fact that $\mathcal{E}_{1}(s;l,2q)=\mathcal{E}_{1}(-s;l,2q)$ imply that $\mathcal{J}_{1}(s;l,2q)$ is an even function of $s$, i.e. $\mathcal{J}_{1}(s;l,2q)=\mathcal{J}_{1}(-s;l,2q)$, so Cauchy's theorem implies that
\begin{eqnarray}
\frac{1}{2\pi i}\int_{(\epsilon)}\mathcal{J}_{1}(s;l,2q)\frac{ds}{s}=\frac{1}{2}\textrm{Res}\left(\frac{\mathcal{J}_{1}(s;l,2q)}{s}:s=0\right).\nonumber
\end{eqnarray}
For $l=1$ and a primitive character $\chi$ the functional equation of the $L$-function $L(s,\chi)$ implies that 
$\mathcal{J}_{\chi}(s;1,2q)=\tau^2(\chi)\mathcal{J}_{\overline\chi}(-s;1,2q)$.
In our case $\chi=\psi^2$ is real and non-principal so we have even symmetry, i.e 
 \begin{eqnarray}\label{ct}
\frac{1}{2\pi i}\int_{(\epsilon)}\mathcal{J}_{\psi^2}(s;1,2q)\frac{ds}{s}=\frac{1}{1+\tau^2\left(\psi^2\right)}\textrm{Res}\left(\frac{\mathcal{J}_{\psi^2}(s;1,2q)}{s}:s=0\right)\nonumber
\end{eqnarray}
and so (\ref{almostspecial}) is the real part of 
\begin{eqnarray}\label{almostspecial2}
&&\frac{ \widehat\Phi(0)L\left(1, \varphi_0\overline\psi^2 \right) \tau\left(\overline\psi\right)\psi\left(\overline {2h}\right)}{q^2\left(1+\tau^2\left(\psi^2\right)\right)}\textrm{Res}\left(\frac{\mathcal{J}_{\psi^2}(s;1,2q)}{s}:s=0\right)\nonumber
\end{eqnarray}
in which the pole at $s=0$ is of order $2$ due to the factor $\Gamma_1(s)/s$. Since 
\begin{eqnarray}
\frac{\Gamma_1(s)}{s}=\frac{\Gamma(s+1)}{(2\pi)^ss^2}+\frac{\pi\Gamma(s+1)}{2(2\pi)^ss}+O(1)\nonumber
\end{eqnarray}
as $s\rightarrow 0$, the residue is 
\begin{eqnarray}
&&\frac{d}{ds}\frac{\Gamma(s+1)\Gamma^2\left(\frac{s}{2}+\frac{1}{4}\right)\mathcal{K}_{\psi^2}(s;1,2q)}{\Gamma^2\left(\frac{1}{4}\right)}\left(\frac{8q}{\pi^2}\right)^s\biggr\vert _{s=0}+\frac{\pi}{2}\mathcal{K}_{\psi^2}(0;1,2)\nonumber\\
&=&\mathcal{K}_{\psi^2}'(0;1,2q)+\left(\gamma+\frac{2\Gamma'\left(\frac{1}{4}\right)}{\Gamma\left(\frac{1}{4}\right)}+\log\left(\frac{8q}{\pi^2}\right)+\frac{\pi}{2}\right)\mathcal{K}_{\psi^2}(0;1,2q).\nonumber
\end{eqnarray}

\subsection{The expected value of the non-diagonal terms}
We conclude by noting that the expected value of the non-diagonal terms $\mathcal{P}_2$ over the group $h\in\mathbb{Z}^{\times}_r$ is zero. From (\ref{nondiag3}) for instance, the expected value introduces a factor 
\begin{eqnarray}\label{charsum}
\sum_{\substack{h\pmod r\\(h,r)=1}}\overline\psi\chi_{r/2}\left(h\right)\left(\frac{h}{m}\right)
\end{eqnarray}
for each odd $m\in\mathcal{S}(r)$, in which the Kronecker symbol $(\frac{\cdot}{m})$ is a character of modulus $m$ induced by a character of modulus $m_0|r$. Since  $\overline\psi\chi_{r/2}$ is non-principal, (\ref{charsum}) is zero by character orthogonality, which proves our remark after the statement of Theorem \ref{th2}.

\vspace{0.5cm}

\noindent \textit{Acknowledgment.}  
The first author is grateful to the Leverhulme Trust (RPG-2017-320) for the support through the research project grant ``Moments of $L$-functions in Function Fields and Random Matrix Theory". The research of the second author is supported by a Ph.D. studentship from the College of Engineering, Mathematics and Physical Sciences at the University of Exeter.


\end{document}